\newtheorem{thm}{Theorem}[section]
\newtheorem{lem}[thm]{Lemma}
\newtheorem{rem}[thm]{Remark}
\theoremstyle{definition}
\newtheorem{defn}{Definition}[section]
\newcommand{\scr}[1]{\mathscr #1}
\definecolor{wco}{rgb}{0.5,0.2,0.3}
\numberwithin{equation}{section} \theoremstyle{remark}
\newcommand{\ua}{\uparrow}
\title{{\bf Coupling Methods and Applications on Path Dependent McKean-Vlasov SDEs}\footnote{Supported in
 part by  National Key R\&D Program of China (No. 2022YFA1006000) and NNSFC (12271398).\ \ }
}
\author{
	{\bf   Xing Huang $^{a)}$, Xiaochen Ma $^{b)}$   }\\
	\footnotesize{ a)Center for Applied Mathematics, Tianjin
		University, Tianjin 300072, China}\\
\footnotesize{ b)Department of Mathematics, City University of Hong Kong, Tat Chee Avenue, Hong Kong, China}\\
	\footnotesize{  xinghuang@tju.edu.cn, }
	\footnotesize{   xiaocma2-c@my.cityu.edu.hk}}
\begin{document}
\allowdisplaybreaks
\def\R{\mathbb R}  \def\ff{\frac} \def\ss{\sqrt} \def\B{\mathbf
B} \def\W{\mathbb W}
\def\N{\mathbb N} \def\kk{\kappa} \def\m{{\bf m}}
\def\ee{\varepsilon}\def\ddd{D^*}
\def\dd{\delta} \def\DD{\Delta} \def\vv{\varepsilon} \def\rr{\rho}
\def\<{\langle} \def\>{\rangle} \def\GG{\Gamma} \def\gg{\gamma}
  \def\nn{\nabla} \def\pp{\partial} \def\E{\mathbb E}
\def\d{\text{\rm{d}}} \def\bb{\beta} \def\aa{\alpha} \def\D{\scr D}
  \def\si{\sigma} \def\ess{\text{\rm{ess}}}
\def\beg{\begin} \def\beq{\begin{equation}}  \def\F{\scr F}
\def\Ric{\text{\rm{Ric}}} \def\Hess{\text{\rm{Hess}}}
\def\e{\text{\rm{e}}} \def\ua{\underline a} \def\OO{\Omega}  \def\oo{\omega}
 \def\tt{\tilde} \def\Ric{\text{\rm{Ric}}}
\def\cut{\text{\rm{cut}}} \def\P{\mathbb P} \def\ifn{I_n(f^{\bigotimes n})}
\def\C{\scr C}      \def\aaa{\mathbf{r}}     \def\r{r}
\def\gap{\text{\rm{gap}}} \def\prr{\pi_{{\bf m},\varrho}}  \def\r{\mathbf r}
\def\Z{\mathbb Z} \def\vrr{\varrho}
\def\L{\scr L}\def\Tt{\tt} \def\TT{\tt}\def\II{\mathbb I}
\def\i{{\rm in}}\def\Sect{{\rm Sect}}  \def\H{\mathbb H}
\def\M{\scr M}\def\Q{\mathbb Q} \def\texto{\text{o}}
\def\Rank{{\rm Rank}} \def\B{\scr B} \def\i{{\rm i}} \def\HR{\hat{\R}^d}
\def\to{\rightarrow}\def\l{\ell}\def\iint{\int}
\def\EE{\scr E}\def\Cut{{\rm Cut}}
\def\A{\scr A} \def\Lip{{\rm Lip}}
\def\BB{\scr B}\def\Ent{{\rm Ent}}\def\L{\scr L}
\def\R{\mathbb R}  \def\ff{\frac} \def\ss{\sqrt} \def\B{\mathbf
B}
\def\N{\mathbb N} \def\kk{\kappa} \def\m{{\bf m}}
\def\dd{\delta} \def\DD{\Delta} \def\vv{\varepsilon} \def\rr{\rho}
\def\<{\langle} \def\>{\rangle} \def\GG{\Gamma} \def\gg{\gamma}
  \def\nn{\nabla} \def\pp{\partial} \def\E{\mathbb E}
\def\d{\text{\rm{d}}} \def\bb{\beta} \def\aa{\alpha} \def\D{\scr D}
  \def\si{\sigma} \def\ess{\text{\rm{ess}}}
\def\beg{\begin} \def\beq{\begin{equation}}  \def\F{\scr F}
\def\Ric{\text{\rm{Ric}}} \def\Hess{\text{\rm{Hess}}}
\def\e{\text{\rm{e}}} \def\ua{\underline a} \def\OO{\Omega}  \def\oo{\omega}
 \def\tt{\tilde} \def\Ric{\text{\rm{Ric}}}
\def\cut{\text{\rm{cut}}} \def\P{\mathbb P} \def\ifn{I_n(f^{\bigotimes n})}
\def\C{\scr C}      \def\aaa{\mathbf{r}}     \def\r{r}
\def\gap{\text{\rm{gap}}} \def\prr{\pi_{{\bf m},\varrho}}  \def\r{\mathbf r}
\def\Z{\mathbb Z} \def\vrr{\varrho}
\def\L{\scr L}\def\Tt{\tt} \def\TT{\tt}\def\II{\mathbb I}
\def\i{{\rm in}}\def\Sect{{\rm Sect}}  \def\H{\mathbb H}
\def\M{\scr M}\def\Q{\mathbb Q} \def\texto{\text{o}} \def\LL{\Lambda}
\def\Rank{{\rm Rank}} \def\B{\scr B} \def\i{{\rm i}} \def\HR{\hat{\R}^d}
\def\to{\rightarrow}\def\l{\ell}
\def\8{\infty}\def\I{1}\def\U{\scr U} \def\n{{\mathbf n}}
\maketitle

\begin{abstract} By using coupling by change of conditional probability measure, the log-Harnack inequality for path dependent McKean-Vlasov SDEs with distribution dependent diffusion coefficients is established, which together with the exponential contractivity in $L^2$-Wasserstein distance yields the exponential contractivity in entropy-cost under the uniformly dissipative condition. When the coefficients are only partially dissipative, the exponential contractivity in $L^1$-Wasserstein distance is also derived in the aid of asymptotic reflection coupling, which is new even in the distribution independent case. In addition, the uniform in time propagation of chaos in $L^1$- Wasserstein distance is also obtained for path dependent mean field interacting particle system.

\end{abstract} \noindent
 AMS subject Classification:\  34K50, 60G65.   \\
\noindent
 Keywords: Path dependent McKean-Vlasov SDEs, log-Harnack inequality, exponential contractivity, Wasserstein distance, asymptotic reflection coupling, uniform in time propagation of chaos.
 \vskip 2cm

\section{Introduction}
Let $d\geq 1$ and fix a constant $r_0>0$. Let $\C=C([-r_0,0];\R^{d})$, the set of $\R^{d}$-valued continuous functions on $[-r_0,0]$. For any $f\in C([-r_0,\infty);\R^{d})$, $t\geq 0$, define $f_t\in\C$ as $f_t(s)=f(t+s)$, $s\in[-r_0,0]$, which is called the segment process. When $\C$ is equipped with the uniform norm $\|\xi\|_{\infty}:= \sup_{s\in[-r_0,0]}|\xi(s)|$, we denote it as $\C_\infty$.
Let $\mathscr{B}(\C_\infty)$ be the Borelian $\sigma$-field on $\C_\infty$ and $\mathscr{P}(\C_\infty)$ be the set of all probability measures on $(\C_\infty,\mathscr{B}(\C_\infty))$ equipped with the weak topology. Let $\beta \neq0$ be a constant and consider the following path dependent McKean-Vlasov SDEs:
\beq\label{EQ}\d X(t)=B(X_t,\L_{X_t})\d t+\beta\d W^1(t)+\sigma(X_t,\L_{X_t})\d W^2(t),\end{equation}
 where $\{W^1(t)\}_{t\geq 0}$ and $\{W^2(t)\}_{t\geq0}$ are two independent $d$-dimensional and $d_2$-dimensional Brownian motions on a complete filtration probability space $(\Omega,\{\mathscr{F}_t\}_{t\geq 0},\mathscr{F},\P)$, $\L_{X_t}$ is the law of $X_t$, and
 $$B:\C_\infty\times\mathscr{P}(\C_\infty)\to\mathbb{R}^d,\ \  \sigma:\C_\infty\times\mathscr{P}(\C_\infty)\to \mathbb{R}^{d}\otimes\R^{d_2}$$ are measurable. Note that if we set $\Sigma=\sqrt{\beta I_{d\times d}+\sigma\sigma^\ast}$ and consider
\beq\label{EQc}\d \tilde{X}(t)=B(\tilde{X}_t,\mathscr L_{\tilde{X}_t})\d t+\Sigma(\tilde{X}_t,\mathscr L_{\tilde{X}_t})\d W(t),\end{equation}
for some $d$-dimensional Brownian motion $W(t)$ . Then the probability distribution of \eqref{EQ} coincides with that of \eqref{EQc} if the weak uniqueness holds. Since it is more convenient for us to construct couplings basing on \eqref{EQ}, we will study \eqref{EQ} instead of \eqref{EQc} throughout the paper.
For any $k\geq 1$, let
$$\mathscr{P}_k(\C_\infty):=\big\{\mu\in\mathscr{P}(\C_\infty):\mu(\|\cdot\|^k_\infty)<\infty\big\}. $$
 $\mathscr{P}_k(\C_\infty)$ is a Polish space under the $L^k$-Wasserstein distance
$$\W^{\C_\infty}_k(\mu,\nu):= \inf_{\pi\in\textbf{C}(\mu,\nu)}\left(\int_{\mathscr{C}\times \C}\|\xi-\tilde{\xi}\|^k_{\infty}\pi(\d \xi,\d \tilde{\xi})\right)^{\frac{1} {k}},\ \ \mu,\nu\in\scr P_k(\C_\infty),$$where $\textbf{C}(\mu,\nu)$ is the set of all couplings for $\mu$ and $\nu$. Similarly, we can define $\scr P(\R^d)$ and $\scr P_k(\R^d)$.

\begin{defn}\label{DEF} (i) A continuous process $(X(t))_{t\geq
-r_0}$ on $\mathbb{R}^d$ is called a strong solution of \eqref{EQ} with some $\F_0$-measurable and $\C_\infty$-valued initial value $X_0$, if for any $t\geq 0$, $X(t)$ is $\F_t$-measurable,
and $\P$-a.s.
\begin{align}\label{integ}\nonumber X(t)&=X(t\wedge 0)+\int_0^{t\vee 0}B(X_s,\L_{X_s})\d s+\beta W^1(t\vee 0)\\
&+\int_0^{t\vee 0}\si(X_s,\L_{X_s})\d W^2(s), ~~~~t\ge0.
\end{align}
(ii) For any $\mu_0\in\scr P(\C_\infty)$, $(\tilde{\Omega},\{\tilde{\F}_t\}_{t\geq 0},\tilde{\P},\tilde{W}^1,\tilde{W}^2, \{\tilde{X}(t)\}_{t\geq-r_0})$ is called a weak solution starting from $\mu_0$, if $\L_{\tilde{X}_0}=\mu_0$
and $\tilde{\P}$-a.s. \eqref{integ} holds for  $(\tilde{W}^1,\tilde{W}^2, \{\tilde{X}(t)\}_{t\geq-r_0})$ replacing $(W^1,W^2, \{X(t)\}_{t\geq-r_0})$.

(iii) For $k\geq 1$, the SDE \eqref{EQ} is called well-posed for distributions in $\scr P_k(\C_\infty)$, if for any $\F_0$-measurable initial value
	$X_0$ with $\L_{X_0}\in \scr P_k(\C_\infty)$ (respectively any initial distribution $\gamma\in\scr P_k(\C_\infty)$), it has a unique strong solution (respectively weak solution) such that $\L_{X(\cdot)}\in C([0,\infty);\scr P_k(\R^d))$.
\end{defn}

 When \eqref{EQ} is well-posed for distributions in $\mathscr{P}_k(\C_\infty)$ for some $k\geq 1$, let
$P^{\ast}_t\mu:= \L_{X_t^\mu}, t\geq 0$ for $\L_{X_0^\mu}=\mu\in\mathscr{P}_k(\C_\infty).$
Recall that for two probability measures $\mu,\nu\in\scr P(E)$, the set of all probability measures on some measurable space $(E,\scr E)$, the relative entropy and total variation distance are defined as follows:
$$\Ent(\nu|\mu):= \beg{cases} \int_E (\log \ff{\d\nu}{\d\mu})\d\nu, \ &\text{if}\ \nu\ \text{ is\ absolutely\ continuous\ with\ respect\ to}\ \mu,\\
\infty,\ &\text{otherwise;}\end{cases}$$ and
$$\|\mu-\nu\|_{var} := \sup_{|f|\leq 1}|\mu(f)-\nu(f)|.$$
By Pinsker's inequality (see \cite{Pin}), it holds
\beq\label{ETX} \|\mu-\nu\|_{var}^2\le 2 \Ent(\nu|\mu),\ \ \mu,\nu\in \scr P(E).\end{equation}
The log-Harnack inequality for \eqref{EQ} is formulated as
\begin{align*}
(P_t^\ast\mu_0)(\log f)&\leq \log (P_t^\ast\nu_0)(f)+c(t)\W_2^{\C_\infty}(\mu_0,\nu_0)^2,\\
&\ \  f>0,f\in\scr B_b(\C_\infty),\mu_0,\nu_0\in\scr P_2(\C_\infty), t>r_0
\end{align*}
for some measurable function $c:(r_0,\infty)\to(0,\infty)$ with $\lim_{t\to r_0}c(t)=\infty$, which is equivalent to the entropy-cost inequality
\begin{align}\label{enc}\mathrm{Ent}(P_t^\ast\mu_0|P_t^\ast\nu_0)\leq c(t)\W_2^{\C_\infty}(\mu_0,\nu_0)^2, \ \ \mu_0,\nu_0\in\scr P_2(\C_\infty), t>r_0.
\end{align}
The stochastic functional differential equations (also named path dependent SDEs in the literature) was first proposed in \cite{53}. One can refer to the monograph \cite{29} for the research on stochastic functional differential equations. In \cite{28}, the authors investigated the existence and uniqueness of stochastic functional differential equations under one-sided Lipschitz condition. In \cite{SWY}, the dimension-free Harnack inequalities for stochastic (functional) differential equations with non-Lipschitzian coefficients are established by the method of coupling by change of measure.

The McKean-Vlasov SDE was proposed by McKean in \cite{M} and it has extensive applications in finance mathematics. In recent years, the path dependent McKean-Vlasov SDEs  have also attracted much attention.
\cite{L18} proves the propagation of chaos in total variation distance for path dependent mean field interacting particle system.
\cite{GS} establishes the large and moderate deviation principles for path-distribution dependent stochastic differential equations. \cite{RWu19} investigates the least squares estimator for path dependent McKean-Vlasov SDEs.
\cite{BPR} studies the large deviations for the empirical measure associated to path dependent interacting diffusions. \cite{Han} adopts the technique of relative entropy to derive the well-posedness of path dependent McKean-Vlasov SDEs driven by (fractional) Brownian motion. In \cite{HX}, the first author of the present paper proves the existence and uniqueness for path dependent McKean-Vlasov type SDEs with integrability conditions.
One can also refer to \cite{HRW} for the log-Harnack inequality, and \cite{BRW20} for the Bismut type formula.
So far, when the relative entropy is involved in, the diffusion coefficients are required to be distribution free so that the Girsanov transform is available.

Recently, in the path independent case, i.e. $r_0=0$, some progress has been made in the study of entropy-cost estimate for McKean-Vlasov SDEs with distribution dependent noise. In \cite{HW2209}, the first author and his collaborator adopt  a noise decomposition argument to obtain the log-Harnack inequality for McKean-Vlasov SDEs with diffusion depending only on measure variable. In \cite{23RW}, the authors develop the bi-coupling method to derive the log-Harnack inequality for McKean-Vlasov SDEs with diffusion depending on time, spatial and measure variables. All of these encourage us to investigate log-Harnack inequality for path dependent McKean-Vlasov SDEs with distribution dependent noise.

Before moving on, let us first show the difficulties in studying the log-Harnack inequality for path dependent McKean-Vlasov SDEs with distribution dependent noise. For simplicity, consider
\begin{align*}\d X(t)=\beta\d W^1(t)+\sigma(\L_{X_t})\d W^2(t).
\end{align*}
Then the solution $X^{\mu_0}(t)$ with initial distribution $\mu_0$ follows Gaussian distribution with covariance  matrix $\int_0^t(\beta I_{d\times d}+(\sigma\sigma^\ast)(P_s^\ast\mu_0))\d s$.
It is well-known that for invertible $d\times d$ matrices $\sigma_1,\sigma_2$, if $\sigma_1\neq\sigma_2$, $\{\sigma_1W^1(t)\}_{t\in[0,1]}$ is singular with $\{\sigma_2W^1(t)\}_{t\in[0,1]}$ on the path space $C([0,1],\R^d)$ equipped with the uniform norm. So, when the diffusion coefficients depend on the distribution, one may not expect to derive the entropy-cost estimate \eqref{enc} for the segment process $X_t$. Instead, we may establish the entropy-cost estimate (equivalently the log-Harnack inequality) for $X(t)$:
\begin{align*}\mathrm{Ent}((P_t^\ast\mu_0)\circ\pi_0^{-1}|(P_t^\ast\nu_0)\circ\pi_0^{-1})\leq \tilde{c}(t)\W_2^{\C_\infty}(\mu_0,\nu_0)^2, \ \ \mu_0,\nu_0\in\scr P_2(\C_\infty), t>0
\end{align*}
for $\pi_0:\mathscr{C}\rightarrow\R^d $ defined by $\pi_0(\xi)=\xi(0)$,\ \ $\xi\in\C$,  $$(\gamma\circ \pi_0^{-1})(A):=\gamma(\{\xi:\pi_0(\xi)\in A\}), \ \ A\in\mathscr B(\R^d),\gamma\in\scr P(\C_\infty)$$ and some measurable function $\tilde{c}:(0,\infty)\to(0,\infty)$ with $\lim_{t\to 0}\tilde{c}(t)=\infty$.
To this end, we will adopt the coupling by change of conditional probability measure given $W^2$, i.e. $\P(\cdot|W^2)$, see the proof of Theorem \ref{RL} below for more details.

On the other hand, there are lots of results on the ergodicity of path independent McKean-Vlasov SDEs, i.e. the case $r_0=0$. When the noise is distribution free, i.e. the diffusion coefficients do not depend on the distribution variable, the exponential ergodicity in relative entropy has been established in  \cite{RW} under uniformly dissipative condition. In \cite{W23}, the exponential ergodicity in quasi-Wasserstein distance is also derived even in fully non-dissipative case and the main technique is the (asymptotic) reflection coupling. One can also refer to \cite{EGZ,LMW,LWZ,M03,Schuh,Song,FYW1} and references therein for more results on exponential ergodicity in the sense of various Wasserstein distance. For the technique of asymptotic reflection coupling, one can refer to \cite{W15} for more details. In the path dependent case, \cite{HL} obtains the exponential ergodicity in relative entropy by the entropy-cost estimate as well as the transport inequality for McKean-Vlasov SDEs with distribution free diffusion coefficients.

In the case without delay, the exponential ergodicity in $L^1$-Wasserstein distance for distribution independent SDEs with partially dissipative assumption can be derived by reflection coupling. However, due to the path dependence, the classical reflection coupling is unavailable since the two constructed coupling processes may separate after the first meet. Instead, we will adopt the asymptotic reflection coupling. Observe that after using the It\^{o}-Tanaka formula for the distance between the two coupling processes, the quadratic variation process of the martingale has singularity, which may produce essential difficulty if we apply the BDG inequality. To overcome this difficulty, we will equip some integral type norm induced by an appropriate measure $\Gamma^{r_0}$ on $[-r_0,0]$ instead of the uniform norm on $C([-r_0,0],\R^d)$.

Finally, the quantitative propagation of chaos has become a hot topic in the field of stochastic analysis.   \cite{JW} introduces the entropy method to study the propagation of chaos in relative entropy, see also \cite{L21} for the sharp propagation of chaos in relative entropy by the BBGKY argument. For the uniform in time propagation of chaos, one can refer to \cite{GBM, LL,MRW} by the uniform in time log-Sobolev inequality for McKean-Vlasov SDEs, \cite{DEGZ,LWZ} by using asymptotic reflection coupling method, \cite{LMW} by the technique of asymptotic refined basic coupling in L\'{e}vy noise case.

In this paper, we first prove the log-Harnack inequality for path dependent McKean-Vlasov SDEs with distribution dependent noise, which together with the exponential contractivity in $L^2$-Wasserstein distance derived by synchronous coupling under the uniformly dissipative condition yields the exponential contractivity in entropy-cost. In the partially dissipative situation, we will apply the asymptotic reflection coupling to study the exponential contractivity in $L^1$-Wasserstein distance induced by some integral type norm with respect to an appropriate measure $\Gamma^{r_0}$ on $[-r_0,0]$. The asymptotic reflection coupling is also used to derive the uniform in time propagation of chaos in $L^1$-Wasserstein distance for path dependent mean field interacting system, where the drifts are assumed to be partially dissipative.


The main contributions of the paper include:
\begin{enumerate}
\item[(1)] When the diffusion coefficients only depend on the measure variable, the log-Harnack inequality for path dependent McKean-Vlasov SDEs is established.
\item[(2)]   The exponential contractivity in entropy-cost is derived under uniformly dissipative condition.
\item[(3)] Under partially dissipative assumption, the exponential contractivity in $L^1$-Wasserstein distance  is obtained for path dependent McKean-Vlasov SDEs driven by multiplicative Brownian noise, which is new even in the distribution independent case.
\item[(4)]   The uniform in time propagation of chaos in $L^1$-Wasserstein distance for path dependent mean field interacting system driven by multiplicative Brownian noise is offered, where the drifts are assumed to be partially dissipative.
\end{enumerate}

The paper is organized as follows: In Section 2, we study exponential contractivity in entropy-cost under the uniformly dissipative condition. The exponential contractivity and uniform in time propagation of chaos in $L^1$-Wasserstein distance under the partially dissipative condition are investigated in Section 3. Finally, the well-posedness for path dependent McKean-Vlasov SDEs and mean field interacting particle system as well as a uniform in time estimate for the second moment of  path dependent McKean-Vlasov SDEs is provided in Section 4. Throughout the paper, we will use $C$ or $c$ as a constant, the values of which may change from one place to another.

\section{Coupling by change of conditional probability measure and applications}
\subsection{Log-Harnack inequality}
To derive the log-Harnack inequality, we make the following assumptions.
\begin{enumerate}
	\item[\bf{(A)}] $\sigma(\xi,\gamma)$ only depends on $\gamma$.
	There exists a constant $K_0>0$ such that for any $\xi,\eta\in\C$ and $\mu,\nu\in\mathscr{P}_2(\C_\infty)$,
	\begin{equation*}\begin{aligned}
	&|B(\xi,\mu)-B(\eta,\nu)|\leq K_0\left(\|\xi-\eta\|_{\infty}+\W^{\C_\infty}_2(\mu,\nu)\right),\\
	&\|\sigma(\mu)-\sigma(\nu)\|_{HS}\leq K_0\W^{\C_\infty}_2(\mu,\nu).
	\end{aligned}\end{equation*}
\end{enumerate}
Recall $\pi_0(\xi)=\xi(0)$,\ \ $\xi\in\C$. For any $\gamma\in\scr P(\C_\infty)$, let $$(\gamma\circ \pi_0^{-1})(A)=\gamma(\{\xi:\pi_0(\xi)\in A\}), \ \ A\in\mathscr B(\R^d).$$
The following result characterizes the log-Harnack inequality for \eqref{EQ} under {\bf (A)} and the main tool is coupling by change of conditional probability measure.
\begin{thm}\label{RL} Assume {\bf (A)}. Then there exists an increasing function $c:(0,\infty)\to[0,\infty)$ such that for positive $f\in\mathscr{B}_b(\R^d)$,
	\beg{equation*}\beg{split}
	&((P_t^\ast\mu_0)\circ \pi_0^{-1})(\log f) \\
&\leq\log \{((P_t^\ast\nu_0)\circ \pi_0^{-1})(f)\}+\frac{c(t)}{t}\W^{\C_\infty}_2(\mu_0,\nu_0)^2,\ \ t> 0,  \mu_0,\nu_0\in \scr P_{2}(\C_\infty).\end{split}\end{equation*}
Consequently, it holds
\begin{align*}
\|(P_t^\ast\mu_0)\circ \pi_0^{-1}-(P_t^\ast\nu_0)\circ \pi_0^{-1}\|_{var}^2&\leq2\mathrm{Ent}((P_t^\ast\mu_0)\circ \pi_0^{-1}|(P_t^\ast\nu_0)\circ \pi_0^{-1})\\
&\leq \frac{2c(t)}{t}\W^{\C_\infty}_{2}(\mu_0,\nu_0)^2,\ \ t>0,  \mu_0,\nu_0\in \scr P_{2}(\C_\infty).
\end{align*}
\end{thm}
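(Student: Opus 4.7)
The plan is to adapt coupling by change of measure to a conditional setting. Since $\sigma$ is distribution-dependent, the pushed-forward laws of the two diffusions are generally mutually singular on path space, so one cannot hope to couple the full segment processes. The key observation is that, as $\sigma$ depends only on the law variable (hence is an $\omega$-deterministic function of time), the discrepancy $[\sigma(P_s^\ast\mu_0)-\sigma(P_s^\ast\nu_0)]\d W^2(s)$ is measurable with respect to $W^2$ alone, which allows us to condition on the entire path of $W^2$ and perform a Girsanov transform only in the $W^1$-direction.

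Concretely, fix $t>0$, take an optimal $L^2$-Wasserstein coupling $(X_0,Y_0)$ of $(\mu_0,\nu_0)$, and let $Y$ solve \eqref{EQ} with initial segment $Y_0$ driven by $(W^1,W^2)$. Set the $W^2$-measurable quantity $M_s:=\int_0^s[\sigma(P_u^\ast\mu_0)-\sigma(P_u^\ast\nu_0)]\d W^2(u)$ and construct $\tilde X$ from $X_0$ via
$$\d\tilde X(s)=B(\tilde X_s,P_s^\ast\mu_0)\d s+\beta\xi(s)\d s+\beta\d W^1(s)+\sigma(P_s^\ast\mu_0)\d W^2(s),$$
with the explicit drift perturbation
$$\beta\xi(s):=\frac{Y_0(0)-X_0(0)}{t}-\bigl[B(\tilde X_s,P_s^\ast\mu_0)-B(Y_s,P_s^\ast\nu_0)\bigr]-\frac{M_t}{t}.$$
With this choice the $\tilde X$-equation becomes effectively linear (the $B(\tilde X_s,\cdot)$ terms cancel), and a direct integration gives $\tilde X(s)-Y(s)=(1-s/t)(X_0(0)-Y_0(0))+M_s-(s/t)M_t$, so in particular $\tilde X(t)=Y(t)$ almost surely. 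Since $\xi(s)$ is measurable with respect to $\sigma(W^1_{[0,s]},W^2_{[0,t]})$ and $W^1$ remains a Brownian motion in this enlarged filtration by the independence $W^1\perp W^2$, a conditional Girsanov theorem applies: with $R:=\exp\bigl(-\int_0^t\xi(s)\cdot\d W^1(s)-\tfrac12\int_0^t|\xi(s)|^2\d s\bigr)$, the process $\hat W^1(s):=W^1(s)+\int_0^s\xi(u)\d u$ is a Brownian motion under $\d\Q:=R\,\d\P$ that is independent of $W^2$, and under $\Q$ the process $\tilde X$ solves \eqref{EQ} starting from $\mu_0$; hence the $\Q$-law of $\tilde X(t)$ equals $(P_t^\ast\mu_0)\circ\pi_0^{-1}$.

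For positive $f\in\scr B_b(\R^d)$, the standard entropy-dual inequality $\E^\P[R\log f(Y(t))]\le\log\E^\P[f(Y(t))]+\E^\P[R\log R]$ combined with $\tilde X(t)=Y(t)$ gives
$$((P_t^\ast\mu_0)\circ\pi_0^{-1})(\log f)=\E^\Q[\log f(Y(t))]\le\log((P_t^\ast\nu_0)\circ\pi_0^{-1})(f)+\tfrac12\E^\Q\!\int_0^t\!|\xi(s)|^2\d s,$$
so it remains to dominate $\E^\Q\!\int_0^t\!|\xi|^2\d s$ by $c(t)t^{-1}\W_2^{\C_\infty}(\mu_0,\nu_0)^2$. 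The two constant-in-$s$ pieces of $\xi$ contribute $O(|X_0(0)-Y_0(0)|^2/t)+O(\E^\Q|M_t|^2/t)$ after integration; the $B$-difference is handled via \textbf{(A)}, the explicit formula for $\tilde X_s-Y_s$, and Doob/BDG inequalities to yield $C(t)\W_2^{\C_\infty}(\mu_0,\nu_0)^2$. Since Girsanov preserves the $W^2$-marginal, $\E^\Q|M_t|^2=\E^\P|M_t|^2\le K_0^2\int_0^t\W_2^{\C_\infty}(P_u^\ast\mu_0,P_u^\ast\nu_0)^2\d u$, and a standard synchronous-coupling plus Gronwall argument for \eqref{EQ} under \textbf{(A)} shows $\W_2^{\C_\infty}(P_u^\ast\mu_0,P_u^\ast\nu_0)\le e^{Cu}\W_2^{\C_\infty}(\mu_0,\nu_0)$. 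Assembling these estimates produces the desired $c(t)/t$ log-Harnack bound, from which the total-variation and entropy inequalities follow via Pinsker's inequality \eqref{ETX}. The main obstacle is the legitimacy of the conditional Girsanov change when $\xi$ anticipates the whole path $W^2_{[0,t]}$ (together with the verification of Novikov's condition via a routine truncation), and this is resolved precisely by the independence $W^1\perp W^2$ and the $\omega$-deterministic nature of $\sigma(P_s^\ast\mu)$.
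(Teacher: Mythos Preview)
Your proposal is correct and follows essentially the same route as the paper's proof: both construct a bridge process that meets the target at time $t$ via an additive drift in the $W^1$-direction only, then apply Girsanov conditionally on $W^2$ (and $\F_0$) so that the perturbed process acquires the law of the other solution, and finally bound the entropy cost using the Lipschitz assumption {\bf (A)} together with the a-priori estimate $\W_2^{\C_\infty}(P_u^\ast\mu_0,P_u^\ast\nu_0)\le C(u)\W_2^{\C_\infty}(\mu_0,\nu_0)$. The only presentational differences are that the paper makes the conditioning on $(W^2,\F_0)$ explicit (working with $\P^{W^2,0}$ and $\Q^{W^2,0}$) and identifies the relevant law by subtracting the $W^2$-stochastic integral $\xi^\nu$ to reduce to a $W^1$-driven SDE, whereas you package the same argument as a global Girsanov in the enlarged filtration $\F_s^{W^1}\vee\sigma(W^2_{[0,t]})\vee\F_0$ and identify the law through the joint $\Q$-distribution of $(X_0,\hat W^1,W^2)$; these are equivalent.
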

\begin{proof} Since $\sigma(\xi,\gamma)$ does not depend on $\xi$ due to {\bf(A)}, \eqref{EQ} reduces to
\beq\label{EQ0}\d X(t)=B(X_t,\L_{X_t})\d t+\beta\d W^1(t)+\sigma(\L_{X_t})\d W^2(t).\end{equation}
Let $\mu_0,\nu_0\in \scr P_{2}(\C_\infty)$. Denote $\mu_t=P_t^\ast\mu_0,\nu_t=P_t^\ast\nu_0$. In view of {\bf(A)}, it is standard to derive
\begin{align}\label{w2t}\W_2^{\scr C_\infty}(\mu_t,\nu_t)\leq C(t)\W_2^{\scr C_\infty}(\mu_0,\nu_0),\ \ t\geq 0
\end{align}
for some increasing function $C:[0,\infty)\to[0,\infty)$.
Let $X_0,\tilde{X}_0$ be two $\F_0$-measurable $\C_\infty$-valued random variables such that
$$\L_{X_0}=\mu_0,\ \ \L_{\tilde{X}_0}=\nu_0. $$ Let $X_t$ solve \eqref{EQ0} with initial value $X_0$, i.e.
\beq\label{EQ1}\d X(t)=B(X_t,\mu_t)\d t+\beta\d W^1(t)+\sigma(\mu_t)\d W^2(t).\end{equation}
Fix $t_0>0$. Let
\beq\label{XI}
\xi ^{\nu}(t):=\int_0^{t\lor 0}{\sigma(\nu_s)}\d W^2 (s),\ \ t\in [-r_0,t_0],
\end{equation}
and $\xi^\mu(t)$ be defined in the same way by replacing $\nu_t$ with $\mu_t$.
By {\bf(A)} and BDG's inequality, we arrive at
\beq\label{XX}\begin{aligned}
\E \sup_{t\in [0,t_0]}\left|\xi^{\mu}(t)-\xi^{\nu}(t)\right|^2\leq 4K_0^2\int_{0}^{t_0}\W^{\C_\infty}_2(\mu_t,\nu_t)^2\d t.
\end{aligned}\end{equation}
Consider
\beq\label{YEQ}\begin{aligned}
	\d Y(t)=&B(X_t,\mu_t)\d t+\frac{\xi^{\mu}(t_0)-\xi^{\nu}(t_0)+X_0(0)-\tilde{X}_0(0)}{t_0} \d t\\
	&+\beta\d W^1(t)+\sigma(\nu_t)\d W^2(t),\ \ t\in[0,t_0],Y_0=\tilde{X}_0.
\end{aligned}\end{equation}
From \eqref{EQ1}, \eqref{XI} and \eqref{YEQ}, we derive
\beq\label{YXEQ}\begin{aligned}
	Y(t)-X(t)&=\frac{t_0-t}{t_0}\left(\tilde{X}_0(0)-X_0(0)\right)\\
	& +\frac{t}{t_0}\left(\xi^{\mu}(t_0)-\xi^{\nu}(t_0)\right)+\xi^{\nu}(t)-\xi^{\mu}(t),\ \ t\in[0,t_0].
\end{aligned}\end{equation}
This implies that
\begin{equation}\label{Y-X}\begin{aligned}
\|Y_t-X_t\|^2_{\infty}&\leq\sup_{t\in[-r_0,t_0]}|Y(t)-X(t)|^2\\
&\leq4\|X_0-\tilde{X}_0\|_{\infty}^2+4\sup_{t\in[0,t_0]}|\xi^{\mu}(t)-\xi^{\nu}(t)|^2,\ \ t\in[0,t_0].
\end{aligned}\end{equation}
Let
\begin{equation*}\begin{aligned}
\phi(t):=&\frac{1}{\beta}\left[B(Y_t,\nu_t)-B(X_t,\mu_t)\right]\\
&-\frac{1}{\beta t_0}\left[\xi^{\mu}(t_0)-\xi^{\nu}(t_0)+X_0(0)-\tilde{X}_0(0)\right],\ \ t\in[0,t_0].
\end{aligned}\end{equation*}
According to {\bf(A)}, there exists a constant $C_1>0$ such that
\begin{align*}
	|\phi(t)|^2&\leq \frac{C_1}{t^2_0}\left[|X_0(0)-\tilde{X}_0(0)|^2+\sup_{t\in[0,t_0]}|\xi^{\nu^1}(t)-\xi^{\nu^2}(t)|^2\right]\\
	&\ \ \ \ + C_1\W^{\C_\infty}_2(\mu_t,\nu_t)^2+C_1 \|Y_t-X_t\|^2_{\infty},\ \ t\in[0,t_0].
\end{align*}
This together with \eqref{Y-X} yields that we can find a constant $C_2>0$ such that
\beq\label{IPH2}\begin{aligned}
	\frac{1}{2}\int_{0}^{t_0}|\phi(t)|^2\d t&\leq C_2\int_{0}^{t_0}\W^{\C_\infty}_2(\mu_t,\nu_t)^2\d t+C_2 t_0\|X_0-\tilde{X}_0\|_{\infty}^2\\
	& +\frac{C_2}{t_0}|X_0(0)-\tilde{X}_0(0)|^2+\left(\frac{C_2}{t_0}+C_2t_0\right)\sup_{t\in[0,t_0]}|\xi^{\mu}(t)-\xi^{\nu}(t)|^2.
\end{aligned}\end{equation}
We will use the conditional probability and the conditional expectation given by both $W^2$ and $\F_0$:
\begin{equation*}
	\P^{W^2,0}:=\P(\cdot|W^2,\F_0),\ \ \E^{W^2,0}:=\E(\cdot|W^2,\F_0).
\end{equation*}
Let $\d\Q^{W^2,0}:= R\d\P^{W^2,0}$, where
$$R:=\e^{\int_{0}^{t_0}\<\phi(s),\d W^1(s) \>-\frac{1}{2}\int_{0}^{t_0}|\phi(s)|^2\d s}.$$
By Girsanov's theorem, under the weighted conditional probability $\Q^{W^2,0}$,
$$\widetilde{W}^1(t):=W^1(t)-\int_{0}^{t}\phi(s)\d s,\ \ t\in[0,t_0]$$
is a $d$-dimensional Brownian motion. By \eqref{YEQ}, $\hat{Y}(t):=Y(t)-\xi^{\nu}(t),t\in[-r_0,t_0]$ solves the SDE
$$\d \hat{Y}(t)=B(\hat{Y}_t+\xi^{\nu}_t,\nu_t)\d t+\beta\d \widetilde{W}^1(t),\ \,\hat{Y}_0=\tilde{X}_0,\ \ t\in[0,t_0].$$
Let $\tilde{X}(t)$  solve
\begin{align*}\d \tilde{X}(t)=B(\tilde{X}_t,\nu_t)\d t+\beta\d W^1(t)+\sigma(\nu_t)\d W^2(t)\end{align*}
with initial value $\tilde{X}_0$.
Then
$$\hat{X}(t):=\tilde{X}(t)-\xi^{\nu}(t),\ \ t\in[-r_0,t_0]$$
solves
\begin{align}\label{tXi}\d \hat{X}(t)=B(\hat{X}_t+\xi^{\nu}_t,\nu_t)\d t+\beta\d W^1(t),\ \,\hat{X}_0=\tilde{X}_0,\ \ t\in[0,t_0].
\end{align}
The weak uniqueness of \eqref{tXi} gives
$$\L_{\hat{Y}(t_0)|\Q^{W^2,0}}=\L_{\hat{X}(t_0)|\P^{W^2,0}}.$$
Since $\xi^{\nu}(t_0)$ is deterministic given $W^2$, it follows that
$$\L_{Y(t_0)|\Q^{W^2,0}}=\L_{[\hat{Y}(t_0)+\xi^{\nu}(t_0)]|\Q^{W^2,0}}=\L_{[\hat{X}(t_0)+\xi^{\nu}(t_0)]|\P^{W^2,0}} =\L_{\tilde{X}(t_0)|\P^{W^2,0}}.$$
Combining this with the fact $X(t_0)=Y(t_0)$ due to \eqref{YXEQ}, we obtain
\begin{equation*}
\E^{W^2,0}[f(\tilde{X}(t_0))]=\E_{\Q^{W^2,0}}[f(Y(t_0))]=\E^{W^2,0}[R f(X(t_0))],\ \ f\in\B_b(\R^d).
\end{equation*}
By Young's inequality \cite[Lemma 2.4]{MAW}, for any $ f\in \mathscr{B}_b(\R^d)$ with $f>0$, we can derive
\beq\label{YOUNG}\begin{aligned}
	&\E^{W^2,0}\left[\log f(\tilde{X}(t_0))\right]
	&\leq \log \E^{W^2,0}\left[f(X(t_0))\right]+\E^{W^2,0}\left[R \log R\right],
\end{aligned}\end{equation}
and
\beq\label{YOUNG2}\begin{aligned}
	&\ \ \ \ \E^{W^2,0}\left[R \log R\right]\\
	&=\E_{\Q^{W^2,0}}\left[\int_{0}^{t_0}\left\<\phi(s),\d \left(W^1(s) -\int_{0}^{s}\phi(r)\d r\right)\right\>\right] +\E_{\Q^{W^2,0}}\left[\frac{1}{2}\int_{0}^{t_0}|\phi(s)|^2\d s\right]\\
	&=\E_{\Q^{W^2,0}}\left[\frac{1}{2}\int_{0}^{t_0}|\phi(s)|^2\d s\right].
\end{aligned}\end{equation}
Combining \eqref{YOUNG} with \eqref{YOUNG2}, \eqref{IPH2}, \eqref{XX}, \eqref{YOUNG2} and Jensen's inequality, we find a constant $C_3>0$ such that for $f>0$ and $f\in \mathscr{B}_b(\R^d)$,
\begin{equation*}\label{PTLF0}\begin{aligned}
&\ \ \ \ \E \left[\log f\left(\tilde{X}(t_0)\right)\right]\\
&=\E\left[\E^{W^2,0} \left(\log f\left(\tilde{X}(t_0)\right)\right)\right]\\
&\leq \E\left[\log \E^{W^2,0} f\left(X(t_0)\right)\right]+\E\left[\E_{\Q^{W^2,0}}\left[\frac{1}{2}\int_{0}^{t_0}|\phi(s)|^2\d s\right]\right]\\ 	
&\leq \log \E f\left(X(t_0)\right) +C_3t_0\E\|X_0-\tilde{X}_0\|^2_{\infty}+\frac{C_3}{t_0}\E|X_0(0)-\tilde{X}_0(0)|^2\\
&+ {C_3}\left(1+t_0+\frac{1}{t_0}\right)\int_{0}^{t_0}\W^{\C_\infty}_2(\mu_t,\nu_t)^2\d t.
\end{aligned}\end{equation*}	
Taking the infimum with respect to all $X_0, \tilde{X_0}$ satisfying $\L_{X_0}=\mu_0,~\L_{\tilde{X_0}}=\nu_0$, and combining with \eqref{w2t}, we arrive at
\beg{equation*}\beg{split}
&	((P_{t_0}^\ast\nu_0)\circ \pi_0^{-1})(\log f) \\
&\leq\log \{((P_{t_0}^\ast\mu_0)\circ \pi_0^{-1})(f)\}+\frac{c(t_0)}{t_0}\W^{\C_\infty}_2(\mu_0,\nu_0)^2,\ \   \mu_0,\nu_0\in \scr P_{2}(\C_\infty)\end{split}\end{equation*}
for some increasing function $c:[0,\infty)\to[0,\infty)$.
According to \cite[Theorem 1.4.2(2)]{HARNACK}, we conclude
\begin{equation}\label{haha}\begin{aligned}
\Ent\left((P_t^\ast\nu_0)\circ \pi_0^{-1}|(P_t^\ast\mu_0)\circ \pi_0^{-1}\right)\leq\frac{c(t)}{t}\W^{\C_\infty}_2(\mu_0,\nu_0)^2,
\end{aligned}\end{equation}
Finally, by Pinsker's inequality \eqref{ETX}, we complete the proof.

\end{proof}	
\subsection{Exponential contractivity in entropy-cost}	
In this part, we will use the log-Harnack inequality to study the exponential contractivity in entropy-cost. To this end, we still assume that $\sigma(\xi,\gamma)$ only depends on $\gamma$ and consider the McKean-Vlasov SDEs:
\beq\label{EQth}\d X(t)=B(X_t,\L_{X_t})\d t+\beta\d W^1(t)+\sigma(\L_{X_t})\d W^2(t).\end{equation}
We make the following uniformly dissipative assumption.
\begin{enumerate}
	\item[\bf{(B)}] $B$ is continuous in $\C_\infty\times \mathscr{P}_2(\C_\infty)$ and there exist constants $K_1, K_2, K_3 >0$ such that for any $\xi,\eta\in\C$ and $\mu,\nu\in\mathscr{P}_2(\C_\infty)$,
\begin{equation*}\begin{aligned}
	&\ \ \ \ 2 \<B(\xi,\mu)-B(\eta,\nu), \xi(0)-\eta(0) \> \\
&\leq \frac{K_1}{2}\W^{\C_\infty}_2(\mu,\nu)^2 + K_2\|\xi-\eta\|_{\C_\infty}^2 - K_3|\xi(0)-\eta(0)|^2,
	\end{aligned}\end{equation*}
and
\begin{equation*}\begin{aligned}
	&\|\sigma(\mu)-\sigma(\nu)\|_{HS}^2\leq\frac{ K_1}{2}\W^{\C_\infty}_2(\mu,\nu)^2 .
	\end{aligned}\end{equation*}
\end{enumerate}
\begin{rem}\label{wellp} According to Theorem \ref{wep} below, \eqref{EQth} is well-posed in $\scr P_2(\C_\infty)$.
\end{rem}
\begin{thm}\label{EE} Assume {\bf (B)} with
	\begin{align}\label{dic}20 K_1 + 2 K_2 < K_3 \e^{-K_3 r_0}.\end{align}
Then the following assertions hold.
\begin{enumerate}
\item[(1)] There exist constants $c, \lambda > 0$ such that
	\beg{equation*}\beg{split}
\W_2^{\C_\infty}(P^{\ast}_t\mu_0,P^{\ast}_t\nu_0)\leq c\e^{-\lambda t}\W_2^{\C_\infty}(\mu_0,\nu_0),\ \ t\geq 0, \mu_0,\nu_0\in \scr P_{2}(\C_\infty).
\end{split}\end{equation*}
\item[(2)] If in addition {\bf (A)} holds, then there exists a constant $\tilde{c}>0$ such that
	\begin{align*}
&\Ent\left((P_t^\ast\mu_0)\circ\pi_0^{-1}|(P_t^\ast\nu_0)\circ\pi_0^{-1}\right)\leq \tilde{c}\e^{-2\lambda t}\W_2^{\C_\infty}(\mu_0,\nu_0)^2,\ \ t\geq 1, \mu_0,\nu_0\in \scr P_{2}(\C_\infty).
\end{align*}
\end{enumerate}
\end{thm}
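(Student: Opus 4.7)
For part (1), I plan to use the classical synchronous coupling. Given $\F_0$-measurable $X_0, \tilde X_0$ with $\L_{X_0}=\mu_0$, $\L_{\tilde X_0}=\nu_0$, let $X, \tilde X$ solve \eqref{EQth} driven by the same pair $(W^1, W^2)$. Write $Z(t) := X(t)-\tilde X(t)$, $\mu_t := P_t^\ast\mu_0$, $\nu_t := P_t^\ast\nu_0$. It\^o's formula applied to $|Z(t)|^2$, combined with the two parts of \textbf{(B)} and the coupling bound $\W_2^{\C_\infty}(\mu_t,\nu_t)^2 \le \E\|Z_t\|_\infty^2$, yields
\begin{align*}
d|Z(t)|^2 \le \bigl\{K_1\E\|Z_t\|_\infty^2 + K_2\|Z_t\|_\infty^2 - K_3|Z(t)|^2\bigr\}dt + dM_t,
\end{align*}
where $M$ is the martingale coming from $\sigma(\mu_t)-\sigma(\nu_t)$. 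Multiplying through by $e^{K_3 t}$ removes the pointwise dissipative term.

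Next I integrate, take $\sup_{u\in[0,T]}$ and expectation, and estimate the resulting martingale by BDG together with the Lipschitz bound $\|\sigma(\mu_s)-\sigma(\nu_s)\|_{HS}^2 \le \tfrac{K_1}{2}\E\|Z_s\|_\infty^2$. Young's inequality then absorbs half of $\E\sup_{[0,T]}e^{K_3 u}|Z(u)|^2$ on the right, producing the closure
\begin{align*}
\E\sup_{u\in[0,T]}e^{K_3 u}|Z(u)|^2 \le 2\E|Z(0)|^2 + (20K_1+2K_2)\int_0^T e^{K_3 s}\E\|Z_s\|_\infty^2\,ds;
\end{align*}
the numerical $20$ is exactly what a specific BDG constant contributes, and is what dictates the shape of hypothesis \eqref{dic}. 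Splitting the integrand on $[0, r_0]$ (absorbed into a constant multiple of $\E\|Z_0\|_\infty^2$ via $\|Z_s\|_\infty^2 \le \|Z_0\|_\infty^2 + \sup_{[0,s]}|Z|^2$) and on $[r_0, T]$ (where $e^{K_3 s}\E\|Z_s\|_\infty^2 \le e^{K_3 r_0}\E\sup_{[0,s]}e^{K_3 u}|Z(u)|^2$), I close a Gronwall inequality for $F(T) := \E\sup_{[0,T]}e^{K_3 u}|Z(u)|^2$ with exponential rate $(20K_1+2K_2)e^{K_3 r_0}$. Under \eqref{dic} this rate is strictly less than $K_3$, so $\E\|Z_T\|_\infty^2 \le C\E\|Z_0\|_\infty^2 e^{-2\lambda T}$ with $2\lambda := K_3 - (20K_1+2K_2)e^{K_3 r_0} > 0$; taking the infimum over couplings of $(\mu_0,\nu_0)$ delivers (1).

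Part (2) follows by combining (1) with Theorem \ref{RL}. For $t\ge 1$, the semigroup property $P_t^\ast = P_1^\ast \circ P_{t-1}^\ast$ together with Theorem \ref{RL} applied at time $1$ to the pair $(P_{t-1}^\ast\mu_0, P_{t-1}^\ast\nu_0)$ gives
\begin{align*}
\Ent\bigl((P_t^\ast\mu_0)\circ\pi_0^{-1}\,\big|\,(P_t^\ast\nu_0)\circ\pi_0^{-1}\bigr) \le c(1)\,\W_2^{\C_\infty}(P_{t-1}^\ast\mu_0, P_{t-1}^\ast\nu_0)^2,
\end{align*}
and substituting the $L^2$-Wasserstein contractivity from (1) on the right yields the claim with $\tilde c := c(1) c^2 e^{2\lambda}$.

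The main obstacle is that the delay term $K_2\|Z_t\|_\infty^2$ in the drift estimate has no instantaneous dissipation to cancel against, forcing the whole argument to be carried out on the segment norm rather than the pointwise norm. The multiplicative $\sigma$ further inflates the effective prefactor on $K_1$ through a BDG constant, and the memory length produces an additional factor $e^{K_3 r_0}$ when converting segment-supremum bounds back to pointwise ones. The smallness condition \eqref{dic} is calibrated exactly to accommodate these two inflations and leave a strictly positive net decay rate.
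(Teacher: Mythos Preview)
Your proposal is correct and follows essentially the same route as the paper: synchronous coupling, It\^o's formula with \textbf{(B)}, the weight $e^{K_3 t}$, BDG plus Young's inequality to absorb the martingale (producing exactly the constant $20K_1+2K_2$), and Gronwall to close; part (2) is likewise identical, combining the log-Harnack inequality at $t=1$ with the semigroup property and part (1). The only cosmetic difference is that the paper defines its supremum process $\eta_t := \sup_{s\in[-r_0,t]} e^{K_3 s^+}|Z(s)|^2$ starting from $-r_0$, which directly yields $\eta_t \ge e^{K_3(t-r_0)}\|Z_t\|_\infty^2$ for all $t\ge 0$ and thereby avoids your separate splitting of the interval $[0,r_0]$.
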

\begin{proof}(1) We will use the synchronous coupling to derive the exponential contractivity in $\W_2^{\C_\infty}$.
Let $\mu_0,\nu_0\in \scr P_{2}(\C_\infty)$ and $X_t$ and $Y_t$ solve \eqref{EQth} with initial values $X_0$ and $Y_0$ satisfying
\begin{align*}\L_{X_0} = \mu_0,~ \L_{Y_0} = \nu_0.\end{align*}
For simplicity, let $\mu_t = P^{\ast}_t\mu_0 = \L_{X_t}, \nu_t = P^{\ast}_t\nu_0 = \L_{Y_t}$. It follows from It\^o's formula and {\bf (B)} that
\begin{equation}\begin{aligned} \label{eq51}
	&\ \ \ \ \d |X(t) - Y(t)|^2\\
&\leq\left\{2 \<B(X_t,\mu_t)-B(Y_t,\nu_t), X(t)-Y(t) \> + \|\sigma(\mu_t)-\sigma(\nu_t)\|_{HS}^2\right\}\d t+ \d M(t)\\
&\leq \left\{K_1\W^{\C_\infty}_2(\mu_t,\nu_t)^2 + K_2\|X_t-Y_t\|_{\infty}^2 - K_3|X(t)-Y(t)|^2\right\}\d t+ \d M(t),\ \ t\geq 0,
\end{aligned}\end{equation} 	
where 	
	\begin{equation} \label{eq52}	
	\d M(t)= 2 \int_{0}^{t}\left\< X(s)-Y(s), \{\sigma(\mu_s)-\sigma(\nu_s)\}\d W^2(s)\right\>.
\end{equation}
Then it follows from  \eqref{eq51} that
\begin{equation}\begin{aligned} \label{eq53}
&\ \ \ \ \d\left[\e^{K_3 t}|X(t) - Y(t)|^2\right] \\
&\leq K_1\e^{K_3 t}\W^{\C_\infty}_2(\mu_t,\nu_t)^2\d t + K_2\e^{K_3 t}\|X_t-Y_t\|_{\infty}^2 \d t + \e^{K_3 t}\d M(t),\ \ t\geq 0.
\end{aligned}\end{equation}
Let $\eta_t= \sup_{s\in[-r_0,t]}\e^{K_3 s^{+}}|X(s) - Y(s)|^2$.	We conclude from \eqref{eq53} that
\begin{equation}\begin{aligned} \label{eq54}
\eta_t \leq &\|X_0-Y_0\|_{\infty}^2 + \int_{0}^{t}K_1\e^{K_3 r}\W^{\C_\infty}_2(\mu_r,\nu_r)^2\d r\\
&+\int_{0}^{t} K_2\e^{K_3 r}\|X_r-Y_r\|_{\infty}^2 \d r+\sup_{s\in[0,t]}\int_{0}^{s}\e^{K_3 r}\d M(r),\ \ t\geq 0.
\end{aligned}\end{equation}
By BDG's inequality, \eqref{eq52} and {\bf (B)}, we derive
	\begin{align*}
	&\ \ \ \ \E \sup_{s\in[0,t]}\int_{0}^{s}\e^{K_3 r}\d M(r)\\
	&\leq 6 \E\left\{\int_{0}^{t}\e^{2 K_3 r}\|\sigma(\mu_r)-\sigma(\nu_r)\|_{HS}^2\times|X(r)-Y(r)|^2 \d r \right\}^{\frac{1}{2}}\\
		&\leq \frac{1}{2}\E \eta_t + 9 \int_{0}^{t}K_1 \e^{ K_3 r}\W^{\C_\infty}_2(\mu_r,\nu_r)^2\d r.
	\end{align*}
Combining this with \eqref{eq54} and the fact $\eta_t\geq \e^{K_3 (t-r_0)}\|X_t-Y_t\|_{\infty}^2$, we obtain
	\begin{align*}
	&\ \ \ \ \E \e^{K_3 (t-r_0)}\|X_t-Y_t\|_{\infty}^2\\
	&\leq 2\E\|X_0-Y_0\|_{\infty}^2  +\int_{0}^{t}(2K_2+20K_1)\e^{K_3 r}\E \|X_r-Y_r\|_{\infty}^2\d r,\ \ t\geq 0.
	\end{align*}
By Gronwall's inequality, we have
\begin{align*}
\E \e^{K_3 (t-r_0)}\|X_t-Y_t\|_{\infty}^2\leq 2 \e^{\e^{K_3 r_0}(2K_2+20K_1)t}\E\|X_0-Y_0\|_{\infty}^2 ,\ \ t\geq 0.
\end{align*} 	
So, we arrive at
	\begin{equation}\begin{aligned} \label{eq58}
\E\|X_t-Y_t\|_{\infty}^2\leq 2\e^{K_3 r_0}\e^{\e^{K_3 r_0}(2K_2+20K_1-K_3\e^{-K_3 r_0})t}\E\|X_0-Y_0\|_{\infty}^2,\ \ t\geq 0.
	\end{aligned}\end{equation}
Taking infimum with respect to $X_0$ and $Y_0$ satisfying $\L_{X_0} = \mu_0,~ \L_{Y_0} = \nu_0$, by \eqref{dic} and \eqref{eq58}, we conclude that  there exist constants $c, \lambda > 0$ such that
\begin{align}\label{Cot}\W^{\C_\infty}_2(P_t^\ast\mu_0,P_t^\ast \nu_0) \leq c\e^{-\lambda t}\W^{\C_\infty}_2(\mu_0,\nu_0),\ \ t\geq 0.
\end{align}	
(2) By the log-Harnack ineqaulity \eqref{haha} for $t=1$, \eqref{Cot} and using the semigroup property $P^{*}_t=P^{*}_{1}P^{*}_{t-1}$ for $t\geq1$, there exists a constant $c_0>0$ such that for any $t\geq 1$,
\begin{align*}
&\Ent\left((P_t^\ast\mu_0)\circ\pi_0^{-1}|(P_t^\ast\nu_0)\circ\pi_0^{-1}\right)\\
&=\Ent\left((P_1^\ast P_{t-1}^\ast\mu_0)\circ\pi_0^{-1}|(P_1^\ast P_{t-1}^\ast\nu_0)\circ\pi_0^{-1}\right)\\
&\leq c_0\W^{\C_\infty}_2(P_{t-1}^\ast\mu_0,P_{t-1}^\ast\nu_0)^2\leq c_0 c^2\e^{-2\lambda (t-1)}\W_2^{\C_\infty}(\mu_0,\nu_0)^2,\ \ \mu_0,\nu_0\in \scr P_{2}(\C_\infty).
\end{align*}
Therefore, the proof is completed.
\end{proof}

\section{Asymptotic reflection coupling and applications in path dependent system}
As explained in Introduction, to derive the exponential contractivity in $L^1$-Wasserstein distance under partially dissipative condition, we will replace the uniform norm in $\C$ with some integral type norm. More precisely, let
\begin{align}\label{gamma}\Gamma^{r_0}(\d s)=\frac{1}{2}r_0^{-1}\d s+\frac{1}{2}\delta_0(\d s), \ \ s\in[-r_0,0],
\end{align}
 here $\delta_0$ is the Dirac measure at point $0$. $\Gamma^{r_0}$ is a probability measure on $[-r_0,0]$ and $r_0^{-1}1_{[-r_0,0]}(s)\d s$ converges to $\delta_0$ weakly as $r_0\to 0$ so that we use the convention $\Gamma^{0}(\d s)=\delta_0$. Define
\begin{align}\label{xga}\|\xi\|_{\Gamma^{r_0}}=\int_{[-r_0,0]}|\xi(s)|\Gamma^{r_0}(\d s)=\frac{1}{2}|\xi(0)|+\frac{1}{2}r_0^{-1}\int_{[-r_0,0]}|\xi(s)|\d s,\ \ \xi\in\C.
\end{align}
Define the $L^1$-Wasserstein distance induced by $\|\cdot\|_{\Gamma^{r_0}}$:
$$\W_1^{\Gamma^{r_0}}(\mu,\nu):= \inf_{\pi\in\textbf{C}(\mu,\nu)}\int_{\mathscr{C}\times \C}\|\xi-\tilde{\xi}\|_{\Gamma^{r_0}}\pi(\d \xi,\d \tilde{\xi}),\ \ \mu,\nu\in\scr P_1(\C_\infty),$$
where $\textbf{C}(\mu,\nu)$ is the set of all couplings for $\mu$ and $\nu$.
It is not difficult to see that
\begin{align}\label{1in0}\|\xi-\eta\|_{\Gamma^{r_0}}\leq \|\xi-\eta\|_\infty,\ \ \xi,\eta\in\C,
\end{align}
and hence
\begin{align}\label{1in}\W_1^{\Gamma^{r_0}}(\mu,\nu)\leq \W_1^{\C_\infty}(\mu,\nu),\ \ \mu,\nu\in\scr P_1(\C_\infty).
\end{align}
In this section, we assume $\sigma(\xi,\mu)=\sigma(\xi(0))$ and
$B(\xi,\mu)=b^{(0)}(\xi(0))+b^{(1)}(\xi,\mu)$ with  $b^{(0)}:\R^d\to\R^d$ and $b^{(1)}:\C\times \scr P_1(\C_\infty)\to\R^d$.
\subsection{Exponential contractivity in $L^1$-Wasserstein distance for path dependent McKean-Vlasov SDEs}
Consider
\begin{align} \label{al10}
\d X(t)&=b^{(0)}(X(t))\d t+b^{(1)}(X_t,\L_{X_t})\d t+\beta\d W^1(t)+\sigma(X(t))\d W^2(t).
\end{align}

To derive the exponential contractivity in $\W_1^{\Gamma^{r_0}}$, we make the following assumptions.
\begin{enumerate}
\item[{\bf(C)}] There exists a constant $K_\sigma>0$ such that
\begin{align}\label{bslipsg}
&\frac{1}{2}\|\sigma(x_1)-\sigma(x_2)\|^2_{HS}\leq K_\sigma|x_1-x_2|^2, \ \ x_1,x_2\in\R^d.
\end{align}
$b^{(0)}$ is continuous and there exist $R>0$, $K_1\geq 0, K_2>K_\sigma$  such that
\begin{align}\label{pdi}
&\langle x_1-x_2, b^{(0)}(x_1)-b^{(0)}(x_2)\rangle\leq \gamma(|x_1-x_2|)|x_1-x_2|,\ \ x_1,x_2\in\R^d
\end{align}
with
$$\gamma(r)=\left\{
  \begin{array}{ll}
K_1r, & \hbox{$r\leq R$;} \\
    \left\{-\frac{K_1+K_2}{R}(r-R)+K_1\right\}r, & \hbox{$R\leq r\leq 2R$;} \\
    -K_2r, & \hbox{$r>2R$.}
  \end{array}
\right.
$$
Moreover, there exists $K_b\geq0$ such that
\begin{align}\label{lip1a}\nonumber&|b^{(1)}(\xi,\mu)-b^{(1)}(\tilde{\xi},\tilde{\mu})|\\
&\leq K_b(\|\xi-\tilde{\xi}\|_{\Gamma^{r_0}}+\W_1^{\Gamma^{r_0}}(\mu,\tilde{\mu})),\ \ \xi,\tilde{\xi}\in\scr C,\mu,\tilde{\mu}\in\scr P_1(\C_\infty).
\end{align}
\end{enumerate}
\begin{rem}\label{mtd} Again by Theorem \ref{wep} below, under {\bf (C)} and combining with \eqref{1in0} and \eqref{1in}, \eqref{al10} is well-posed in $\scr P_1(\C_\infty)$.
By \eqref{pdi}, $b^{(0)}$ is partially dissipative, i.e. dissipative in long distance and it is equivalent to the condition that there exist $C_1\geq0, C_2>0,D>0$ such that for any $x_1,x_2\in\R^d$,
\begin{align*}
&\langle x_1-x_2, b^{(0)}(x_1)-b^{(0)}(x_2)\rangle\leq C_1|x_1-x_2|^2 1_{\{|x_1-x_2|\leq D\}}-C_2|x_1-x_2|^21_{\{|x_1-x_2|>D\}}.
\end{align*}
For example, let $V(x)=-|x|^4+|x|^2$, $b^{(0)}(x)=\nabla V(x)=-4|x|^2x+2x$, then $b^{(0)}$ satisfies \eqref{pdi}.
\end{rem}
The main result in this part is the following theorem.
\begin{thm}\label{POC10}
Assume {\bf(C)} with
\begin{align}\label{kb-kd1}
K_b<\frac{2\beta^4}{\e^{\frac{2\beta^2}{\delta}r_0}(K_2-K_\sigma)\delta^2}
 \end{align}
 for $\delta:=\int_0^\infty s\e^{\frac{1}{2\beta^2}\int_0^s\{\gamma(v)+K_\sigma v\}\d v}\d s$.
Then there exist constants $c,\lambda>0$ such that
\begin{align*}&\W_1^{\Gamma^{r_0}}(P_t^\ast\mu_0,P_t^\ast\nu_0)\leq c\e^{-\lambda t}\W_1^{\Gamma^{r_0}}(\mu_0,\nu_0),\ \ \mu_0,\nu_0\in \scr P_{1}(\C_\infty).
\end{align*}
\end{thm}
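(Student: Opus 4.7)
I will construct an asymptotic reflection coupling of two solutions $X(t),Y(t)$ of \eqref{al10} with $\L_{X_0}=\mu_0,\L_{Y_0}=\nu_0$, apply the It\^o--Tanaka formula to $f(|X(t)-Y(t)|)$ for a concave Eberle-type modulus $f$ tailored to the partial-dissipativity profile $\gamma$ and the noise Lipschitz constant $K_\sigma$, and then close the resulting linear delay inequality for $h(t):=\E f(|X(t)-Y(t)|)$ under the smallness hypothesis \eqref{kb-kd1}. Concretely, fix $\epsilon>0$ and a $C^1$ cutoff $\pi_\epsilon:[0,\8)\to[0,1]$ with $\pi_\epsilon\equiv 0$ on $[0,\epsilon/2]$ and $\pi_\epsilon\equiv 1$ on $[\epsilon,\8)$. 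Set $Z(t):=X(t)-Y(t)$ and $e_t:=Z(t)/|Z(t)|$ on $\{Z(t)\neq 0\}$, and let $Y$ solve
\begin{equation*}
\d Y(t) = b^{(0)}(Y(t))\d t + b^{(1)}(Y_t,\L_{Y_t})\d t + \beta\bigl(I_{d\times d} - 2\pi_\epsilon(|Z(t)|)e_t e_t^\ast\bigr)\d W^1(t) + \sigma(Y(t))\d W^2(t),
\end{equation*}
with $Y_0$ obtained from an optimal $\W_1^{\Gamma^{r_0}}$-coupling of $(\mu_0,\nu_0)$. By Levy's characterization the $\d W^1$-driver of $Y$ is still a Brownian motion, so by the well-posedness result (Theorem \ref{wep}) $\L_{Y_t}=P_t^\ast\nu_0$; eventually I pass to the limit $\epsilon\downarrow 0$ to obtain the asymptotic coupling.

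\textbf{It\^o formula and concave modulus.} Following the Eberle--Wang recipe, define
\begin{equation*}
\varphi(s) := \e^{-\ff{1}{2\beta^2}\int_0^s\{\gamma(v)+K_\sigma v\}\d v}, \qquad f(r) := \int_0^r \varphi(s) g(s)\,\d s,
\end{equation*}
with a nonincreasing piecewise-affine cutoff $g$ chosen so that $f$ is strictly increasing, concave, satisfies $f'\le 1$, the two-sided bound $c_* r\le f(r)\le c^* r$ for explicit $c_*,c^*>0$ depending on $\gamma,K_\sigma,\beta,R$, and the pointwise contraction
\begin{equation*}
2\beta^2 f''(r) + \Bigl(\gamma(r) + \tfrac{K_\sigma}{2}r\Bigr)f'(r) \le -\lambda_0 f(r), \qquad r>0,
\end{equation*}
with $\lambda_0>0$ determined by $\beta$, $K_2-K_\sigma$, and the constant $\delta$ of \eqref{kb-kd1}. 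The crucial identity $(I-e_t e_t^\ast)e_t e_t^\ast=0$ causes the reflection term to drop out of the It\^o correction for $|Z(t)|$ while doubling the radial noise, so applying It\^o--Tanaka to $|Z(t)|$ and then It\^o to $f(|Z(t)|)$, using \eqref{pdi}, \eqref{bslipsg} and \eqref{lip1a}, and letting $\epsilon\downarrow 0$, yields
\begin{equation*}
\d f(|Z(t)|) \le -\lambda_0 f(|Z(t)|)\,\d t + K_b f'(|Z(t)|)\bigl(\|X_t - Y_t\|_{\Gamma^{r_0}} + \W_1^{\Gamma^{r_0}}(\L_{X_t},\L_{Y_t})\bigr)\d t + \d M(t),
\end{equation*}
where $M$ is a true martingale with $\d\<M\>_t\le \bigl(4\beta^2 + C K_\sigma |Z(t)|^2\bigr) f'(|Z(t)|)^2\,\d t$, nonsingular at $0$ thanks to the reflection cancellation.

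\textbf{Delay Gronwall and main obstacle.} Taking expectation, using $\W_1^{\Gamma^{r_0}}(\L_{X_t},\L_{Y_t})\le\E\|X_t-Y_t\|_{\Gamma^{r_0}}$ and the lower bound $c_*|z|\le f(|z|)$, the function $h(t):=\E f(|Z(t)|)$ satisfies the linear delay inequality
\begin{equation*}
h'(t) \le -\lambda_0 h(t) + \ff{2K_b}{c_*}\int_{[-r_0,0]} h\bigl((t+s)\vee 0\bigr)\,\Gamma^{r_0}(\d s).
\end{equation*}
The precise numerical form of \eqref{kb-kd1} is exactly the contraction threshold for this equation: it guarantees that the characteristic equation $\lambda+\lambda_0 = \ff{2K_b}{c_*}\int_{[-r_0,0]}\e^{-\lambda s}\,\Gamma^{r_0}(\d s)$ admits a strictly negative root. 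A Halanay-type argument then produces $h(t)\le c\e^{-\lambda t}\sup_{s\in[-r_0,0]}h(s\vee 0)$, and inverting $f\asymp\mathrm{id}$ together with taking the infimum over initial couplings yields the claimed $\W_1^{\Gamma^{r_0}}$-contraction. The principal obstacle lies in this closing step: because $b^{(1)}(X_t,\cdot)-b^{(1)}(Y_t,\cdot)$ depends on the whole segment $X_t-Y_t$, the right-hand side of the It\^o formula feeds back the entire past of $h$. Using the integral weight $\Gamma^{r_0}$ rather than the sup norm on $\C$ is what rescues the scheme: the sup norm would force a BDG estimate against the large values of $f'$ near $0$, producing a logarithmic singularity that defeats the delay Gronwall; the $\Gamma^{r_0}$-average smooths this singular behavior and produces a convolution kernel whose contraction threshold matches exactly the algebraic form of \eqref{kb-kd1}. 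Executing the Eberle--Wang construction with the right explicit constants $\lambda_0,c_*$ so as to recover \eqref{kb-kd1} is the main quantitative labor.
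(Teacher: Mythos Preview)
Your overall strategy (asymptotic reflection coupling, Eberle-type concave modulus, then a Gronwall-type closing) matches the paper's, but two concrete points need repair, and the second one is the real obstruction.

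\textbf{Coupling construction.} The matrix $I_{d\times d}-2\pi_\epsilon(|Z(t)|)e_te_t^\ast$ is orthogonal only when $\pi_\epsilon\in\{0,1\}$; for $\pi_\epsilon\in(0,1)$ its square equals $I-4\pi_\epsilon(1-\pi_\epsilon)e_te_t^\ast$, so the $W^1$-noise driving $Y$ does not have covariance $\beta^2 I$ and L\'evy's characterisation fails. You therefore cannot conclude $\L_{Y_t}=P_t^\ast\nu_0$. The standard fix (and the one the paper uses) is to introduce an independent Brownian motion $\tilde W^1$ and let both $X$ and $Y$ be driven by $\beta\pi_R^\epsilon\,\d W^1+\beta\pi_S^\epsilon\,\d\tilde W^1$ with $(\pi_R^\epsilon)^2+(\pi_S^\epsilon)^2=1$, the reflection acting only on the $W^1$-part.

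\textbf{Closing in the $\Gamma^{r_0}$-norm.} Your delay inequality for $h(t)=\E f(|Z(t)|)$ involves $\|X_t-Y_t\|_{\Gamma^{r_0}}$, which for $t<r_0$ feeds in the genuine initial segment $|X_0(t+s)-Y_0(t+s)|$, not $|Z(0)|$; replacing this by $h((t+s)\vee 0)$ is not legitimate. If instead you extend $h$ to $[-r_0,0]$ by the initial data, a Halanay argument produces $h(t)\le c\e^{-\lambda t}\sup_{s\in[-r_0,0]}\E f(|Z_0(s)|)$, hence a bound in $\sup_{s\in[-r_0,0]}\E|X_0(s)-Y_0(s)|$. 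This quantity is \emph{not} controlled by $\E\|X_0-Y_0\|_{\Gamma^{r_0}}$ (take initial data differing only on a short subinterval of $(-r_0,0)$), so the infimum over couplings does not recover $\W_1^{\Gamma^{r_0}}(\mu_0,\nu_0)$ on the right-hand side. In particular your claim that \eqref{kb-kd1} is ``exactly the contraction threshold'' for the Halanay equation cannot be substantiated within this scheme. The paper avoids this by \emph{not} tracking $h(t)$: after obtaining the pointwise bound for $\e^{\lambda_0 v}f(|Z(v)|)$ it averages that same inequality over $v\in[(s-r_0)\vee0,s]$, and the two pieces combine (via the very definition $\Gamma^{r_0}=\tfrac12\delta_0+\tfrac{1}{2r_0}\mathrm{Leb}_{[-r_0,0]}$) into a closed inequality for $\e^{\lambda_0 s}\|Z_s\|_{\Gamma^{r_0}}$ whose initial-data term is precisely $\|Z_0\|_{\Gamma^{r_0}}$. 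Then an ordinary (non-delay) Gronwall applies, and the factor $\e^{\lambda_0 r_0}$ in \eqref{kb-kd1} arises from the shift $\e^{\lambda_0(s-r_0)}\le \e^{\lambda_0(s+u)^+}$ used in the averaging step. Without this averaging trick the argument does not close in the stated norm.
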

\begin{rem} When $r_0=0$, \eqref{kb-kd1} reduces to
$$K_b<\frac{2\beta^4}{(K_2-K_\sigma)\delta^2}.$$
\end{rem}

\begin{proof}

We adopt the technique of asymptotic reflection coupling. Let $\mu_0,\nu_0\in \scr P_1(\C_\infty)$ and denote $\mu_t=P_t^\ast\mu_0, \nu_t=P_t^\ast\nu_0$. For any $\varepsilon\in(0,1]$, let $\pi_R^\varepsilon\in[0,1]$ and $\pi_S^\varepsilon$ be two Lipschitz continuous functions on $[0,\infty)$ satisfying
\begin{align}\label{pir}\pi_R^\varepsilon(x)=\left\{
      \begin{array}{ll}
        1, & \hbox{$x\geq \varepsilon$;} \\
        0, & \hbox{$x\leq \frac{\varepsilon}{2}$}
      \end{array}
    \right.,\ \ (\pi_R^\varepsilon)^2+(\pi_S^\varepsilon)^2=1.
\end{align}
Let $\{\tilde{W}^1(t)\}_{t\geq 0}$ be a $d$-dimensional Brownian motion independent of $\{W^1(t),W^2(t)\}_{t\geq 0}$. Construct
\begin{equation*}\begin{split}
\d \tilde{X}(t)&=b^{(0)}(\tilde{X}(t))\d t+b^{(1)}(\tilde{X}_t,\mu_t)\d t+\sigma(\tilde{X}(t))\d W^2(t)\\
&+\beta\pi_R^\varepsilon(|\tilde{Z}(t)|)\d
W^1(t)+\beta\pi_S^\varepsilon(|\tilde{Z}(t)|)\d \tilde{W}^1(t),
\end{split}\end{equation*}
and
\begin{equation*}\begin{split}
\d Y(t)&=b^{(0)}(Y(t))\d t+b^{(1)}(Y_t,\nu_t)\d t+\sigma(Y(t))\d W^2(t)\\
&+\beta\pi_R^\varepsilon(|\tilde{Z}(t)|)(I_{d\times d}-2\tilde{U}_t\otimes \tilde{U}_t)\d
W^1(t)+\beta\pi_S^\varepsilon(|\tilde{Z}(t)|)\d \tilde{W}^1(t),
\end{split}\end{equation*}
where  $\tilde{Z}(t)=\tilde{X}(t)-Y(t)$, $\tilde{U}_t=\frac{\tilde{Z}(t)}{|\tilde{Z}(t)|}1_{\{|\tilde{Z}(t)|\neq 0\}}$ and $\L_{\tilde{X}_0}=\mu_0$ and $\L_{Y_0}=\nu_0$.
By the It\^{o}-Tanaka formula, \eqref{bslipsg}, \eqref{pdi} and \eqref{lip1a}, we arrive at
\begin{align*}
\d |\tilde{Z}(t)|&\leq \left\<b^{0}(\tilde{X}(t))-b^{0}(Y(t)),\frac{\tilde{Z}(t)} {|\tilde{Z}(t)|}1_{\{|\tilde{Z}(t)|\neq 0\}}\right\>\d t \\ &+\frac{1}{2}\|\sigma(\tilde{X}(t))-\sigma(Y(t))\|_{HS}^2\frac{1}{|\tilde{Z}(t)|}1_{\{|\tilde{Z}(t)|\neq 0\}}\d t+K_b\|\tilde{Z}_t\|_{\Gamma^{r_0}}\d t+K_b\W_1^{\Gamma^{r_0}}(\mu_t,\nu_t)\d t\\
&+\left\<[\sigma(\tilde{X}(t))-\sigma(Y(t))]\d W^2(t),\frac{\tilde{Z}(t)}{|\tilde{Z}(t)|}1_{\{|\tilde{Z}(t)|\neq 0\}}\right\>\\
&+ 2\beta\pi_R^\varepsilon(|\tilde{Z}(t)|)\left\<\frac{\tilde{Z}(t)}{|\tilde{Z}(t)|}1_{\{|\tilde{Z}(t)|\neq 0\}},\d W^1(t)\right\>\\
&\leq \gamma(|\tilde{Z}(t)|)\d t+K_b\|\tilde{Z}_t\|_{\Gamma^{r_0}}\d t+K_\sigma|\tilde{Z}(t)|\d t+K_b\W_1^{\Gamma^{r_0}}(\mu_t,\nu_t)\d t\\
&+\left\<[\sigma(\tilde{X}(t))-\sigma(Y(t))]\d W^2(t),\frac{\tilde{Z}(t)}{|\tilde{Z}(t)|}1_{\{|\tilde{Z}(t)|\neq 0\}}\right\>\\
&+ 2\beta\pi_R^\varepsilon(|\tilde{Z}(t)|)\left\<\frac{\tilde{Z}(t)}{|\tilde{Z}(t)|}1_{\{|\tilde{Z}(t)|\neq 0\}},\d W^1(t)\right\>.
\end{align*}
Let $$\tilde{\gamma}(v)=\gamma(v)+K_{\sigma}v,\ \ v\geq 0,$$ and define
$$f(r)=\int_0^r\e^{-\frac{1}{2\beta^2}\int_0^u\tilde{\gamma}(v)\d v}\int_u^\infty s\e^{\frac{1}{2\beta^2}\int_0^s\tilde{\gamma}(v)\d v}\d s\d u,\ \ r\geq 0.$$
Then $f$ enjoys the following properties:
$$f'(0)=\delta=\int_0^\infty s\e^{\frac{1}{2\beta^2}\int_0^s\tilde{\gamma}(v)\d v}\d s,$$
\begin{align}\label{fii}f''(r)\leq 0, \ \ r\geq 0,
\end{align}
\begin{align*}\lim_{r\to\infty}\frac{f(r)}{r}=\lim_{r\to\infty}f'(r)=\lim_{r\to\infty}\frac{2\beta^2}{-\tilde{\gamma}'(r)} =\frac{2\beta^2}{K_2-K_\sigma},
\end{align*}
\begin{align}\label{see}f'(r)\tilde{\gamma}(r)+2\beta^2 f''(r)= -2\beta^2 r,\ \ r\geq 0,
\end{align}
and
\begin{align}\label{cop}\frac{2\beta^2}{K_2-K_\sigma}r\leq f(r)\leq f'(0)r,
\end{align}
see \cite[Page 1054]{W23} for more details.
By It\^{o}'s formula and \eqref{fii}, we have
\begin{align}\label{itf}
\nonumber\d f(|\tilde{Z}(t)|)&\leq f'(|\tilde{Z}(t)|)K_b\|\tilde{Z}_t\|_{\Gamma^{r_0}}\d t+f'(|\tilde{Z}(t)|)K_b\W_1^{\Gamma^{r_0}}(\mu_t,\nu_t)\d t\\
&+f'(|\tilde{Z}(t)|)\left\<[\sigma(\tilde{X}(t))-\sigma(Y(t))]\d W^2(t),\frac{\tilde{Z}(t)}{|\tilde{Z}(t)|}1_{\{|\tilde{Z}(t)|\neq 0\}}\right\>\\
\nonumber&+f'(|\tilde{Z}(t)|) 2\beta\pi_R^\varepsilon(|\tilde{Z}(t)|)\left\<\frac{\tilde{Z}(t)}{|\tilde{Z}(t)|}1_{\{|\tilde{Z}(t)|\neq 0\}},\d W^1(t)\right\>\\
\nonumber&+f'(|\tilde{Z}(t)|)\tilde{\gamma}(|\tilde{Z}(t)|)\d t+2\beta^2 f''(|\tilde{Z}(t)|)\pi_R^\varepsilon(|\tilde{Z}(t)|)^2\d t.
\end{align}
It follows from $\|f'\|_\infty=f'(0)$, \eqref{pir} and \eqref{see} that
\begin{align}\label{MKG}
				\nonumber&f'(|\tilde{Z}(t)|)\tilde{\gamma}(|\tilde{Z}(t)|)+2\beta^2 f''(|\tilde{Z}(t)|)\pi_R^\varepsilon(|\tilde{Z}(t)|)^2\\
				\nonumber&\leq \left(f'(|\tilde{Z}(t)|)\tilde{\gamma}(|\tilde{Z}(t)|)+2\beta^2 f''(|\tilde{Z}(t)|)\right)\pi_R^\varepsilon(|\tilde{Z}(t)|)^2\\ &+\|f'\|_\infty\left\{\sup_{s\in[0,\varepsilon]}\gamma^{+}(s)+K_\sigma\varepsilon\right\}\\
				\nonumber&\leq -2\beta^2 |\tilde{Z}(t)|+2\beta ^2|\tilde{Z}(t)|\left(1-\pi_R^\varepsilon(|\tilde{Z}(t)|)^2\right) +\|f'\|_\infty\left\{\sup_{s\in[0,\varepsilon]}\gamma^{+}(s)+K_\sigma\varepsilon\right\}\\
				\nonumber&\leq -2\beta^2 |\tilde{Z}(t)|+\ell(\varepsilon)
			\end{align}
for
\begin{align}\label{lva}\ell(\varepsilon):=2\beta^2 \varepsilon +f'(0)\left\{\sup_{s\in[0,\varepsilon]}\gamma^{+}(s)+K_\sigma\varepsilon\right\}.
\end{align}
This combined with \eqref{cop} and \eqref{itf} gives
\begin{align*}
\d f(|\tilde{Z}(t)|)
&\leq -\frac{2\beta^2}{f'(0)}f(|\tilde{Z}(t)|)\d t+f'(0)K_b\|\tilde{Z}_t\|_{\Gamma^{r_0}}\d t+f'(0)K_b\W_1^{\Gamma^{r_0}}(\mu_t,\nu_t)\d t+\ell(\varepsilon)\d t\\
&+f'(|\tilde{Z}(t)|)\left\<[\sigma(\tilde{X}(t))-\sigma(Y(t))]\d W^2(t),\frac{\tilde{Z}(t)}{|\tilde{Z}(t)|}1_{\{|\tilde{Z}(t)|\neq 0\}}\right\>\\
\nonumber&+f'(|\tilde{Z}(t)|) 2\beta\pi_R^\varepsilon(|\tilde{Z}(t)|)\left\<\frac{\tilde{Z}(t)}{|\tilde{Z}(t)|}1_{\{|\tilde{Z}(t)|\neq 0\}},\d W^1(t)\right\>.
\end{align*}
Let $\lambda_0=\frac{2\beta^2}{f'(0)}$. It follows that
\begin{align}\label{fzn}
\nonumber\d [\e^{\lambda_0 t}f(|\tilde{Z}(t)|)]&\leq \e^{\lambda_0 t}f'(0)K_b\|\tilde{Z}_t\|_{\Gamma^{r_0}}\d t+\e^{\lambda_0 t}f'(0)K_b\W_1^{\Gamma^{r_0}}(\mu_t,\nu_t)\d t\\
&+\e^{\lambda_0 t}\ell(\varepsilon)\d t+\d M_t,
\end{align}
where
\begin{align*}M_v&=\int_0^{v}\e^{\lambda_0 t}f'(|\tilde{Z}(t)|)\left\<[\sigma(\tilde{X}(t))-\sigma(Y(t))]\d W^2(t),\frac{\tilde{Z}(t)}{|\tilde{Z}(t)|}1_{\{|\tilde{Z}(t)|\neq 0\}}\right\>\\
&+\int_0^{v}\e^{\lambda_0 t}f'(|\tilde{Z}(t)|) 2\beta\pi_R^\varepsilon(|\tilde{Z}(t)|)\left\<\frac{\tilde{Z}(t)}{|\tilde{Z}(t)|}1_{\{|\tilde{Z}(t)|\neq 0\}},\d W^1(t)\right\>,\ \ v\geq 0.
\end{align*}
For $v\geq 0$, we derive from \eqref{fzn} that
\begin{align}\label{fzndv}
\nonumber\e^{\lambda_0 v}f(|\tilde{Z}(v)|)
&\leq f(|\tilde{Z}(0)|)+\int_0^{v}\e^{\lambda_0 t}f'(0)K_b\|\tilde{Z}_t\|_{\Gamma^{r_0}}\d t+\int_0^{v}\e^{\lambda_0 t}f'(0)K_b\W_1^{\Gamma^{r_0}}(\mu_t,\nu_t)\d t\\
&+\int_0^{v}\e^{\lambda_0 t}\ell(\varepsilon)\d t+M_v.
\end{align}
So, it follows from \eqref{fzndv} that for any $s\geq 0$,
\begin{align}\label{fzn12}
\nonumber&\frac{1}{r_0}\int_{(-r_0+s)\vee0}^s\e^{\lambda_0 v}f(|\tilde{Z}(v)|)\d v\\
\nonumber&\leq \frac{1}{r_0}\int_{(-r_0+s)\vee0}^sf(|\tilde{Z}(0)|)\d v+f'(0)K_b\frac{1}{r_0}\int_{(-r_0+s)\vee0}^s\int_0^{v}\e^{\lambda_0 t}\|\tilde{Z}_t\|_{\Gamma^{r_0}}\d t\d v\\
&+f'(0)K_b\frac{1}{r_0}\int_{(-r_0+s)\vee0}^s\int_0^{v}\e^{\lambda_0 t}\W_1^{{\Gamma^{r_0}}}(\mu_t,\nu_t)\d t\d v\\
\nonumber&+\frac{1}{r_0}\int_{(-r_0+s)\vee0}^s\int_0^{v}\e^{\lambda_0 t}\ell(\varepsilon) \d t\d v+\frac{1}{r_0}\int_{(-r_0+s)\vee0}^sM_v\d v\\
\nonumber&\leq f(|\tilde{Z}(0)|)+f'(0)K_b\int_0^{s}\e^{\lambda_0 t}\|\tilde{Z}_t\|_{\Gamma^{r_0}}\d t\\
\nonumber&+f'(0)K_b\int_0^{s}\e^{\lambda_0 t}\W_1^{\Gamma^{r_0}}(\mu_t,\nu_t)\d t+\int_0^{s}\e^{\lambda_0 t}\ell(\varepsilon) \d t+\frac{1}{r_0}\int_{(-r_0+s)\vee0}^sM_v\d v.
\end{align}
Note that
$$\e^{\lambda_0 (s-r_0)}\frac{1}{r_0}\int_{-r_0}^0f(|\tilde{Z}(s+u)|)\d u\leq\frac{1}{r_0}\int_{-r_0}^0\e^{\lambda_0 (s+u)^+}f(|\tilde{Z}(s+u)|)\d u,\ \ s\geq 0.$$
This together with \eqref{gamma}, \eqref{xga}, \eqref{cop} and the translation invariance of Lebesgue measure implies
\begin{align*}
\e^{\lambda_0 s}\|\tilde{Z}_s\|_{\Gamma^{r_0}}&=\frac{1}{2}
\e^{\lambda_0 s}\frac{1}{r_0}\int_{-r_0}^0|\tilde{Z}(s+u)|\d u+\frac{1}{2}\e^{\lambda_0 s}|\tilde{Z}(s)|\\
&\leq\frac{1}{2}\e^{\lambda_0 r_0}\frac{1}{r_0}\int_{-r_0}^0\e^{\lambda_0 (s+u)^+}|\tilde{Z}(s+u)|\d u+\frac{1}{2}\e^{\lambda_0 s}|\tilde{Z}(s)|\\
&\leq  \frac{1}{2}\e^{\lambda_0r_0}\frac{1}{r_0}\int_{-r_0}^0|\tilde{Z}(u)|\d u+\frac{K_2-K_\sigma}{4\beta^2}\e^{\lambda_0r_0}\frac{1}{r_0}\int_{(-r_0+s)\vee0}^s\e^{\lambda_0 v}f(|\tilde{Z}(v)|)\d v\\
&+\frac{K_2-K_\sigma}{4\beta^2}\e^{\lambda_0 s}f(|\tilde{Z}(s)|),\ \ s\geq 0.
\end{align*}
Combining this with \eqref{cop}, \eqref{fzndv} and \eqref{fzn12}, we arrive at
\begin{align}\label{KGY}
				\e^{\lambda_0 s}\|\tilde{Z}_s\|_{\Gamma^{r_0}}
				\nonumber&\leq \frac{K_2-K_\sigma}{4\beta^2}\e^{\lambda_0r_0}f'(0)\frac{1}{r_0}\int_{-r_0}^0|\tilde{Z}(u)|\d u+\frac{K_2-K_\sigma}{4\beta^2}\e^{\lambda_0r_0}f'(0)|\tilde{Z}(0)|\\
				\nonumber&+\frac{K_2-K_\sigma}{4\beta^2}\e^{\lambda_0r_0}f'(0)K_b\int_0^{s}\e^{\lambda_0 t}\|\tilde{Z}_t\|_{\Gamma^{r_0}}\d t\\
				\nonumber&+\frac{K_2-K_\sigma}{4\beta^2}\e^{\lambda_0r_0}f'(0)K_b\int_0^{s}\e^{\lambda_0 t}\W_1^{\Gamma^{r_0}}(\mu_t,\nu_t)\d t+\frac{K_2-K_\sigma}{4\beta^2}\e^{\lambda_0r_0}\int_0^{s}\e^{\lambda_0 t}\ell(\varepsilon) \d t\\
				\nonumber&+\frac{K_2-K_\sigma}{4\beta^2}\e^{\lambda_0r_0}\frac{1}{r_0}\int_{(-r_0+s)\vee0}^sM_v\d v+\frac{K_2-K_\sigma}{4\beta^2}M_s\\				\nonumber&+\frac{K_2-K_\sigma}{4\beta^2}f'(0)|\tilde{Z}(0)|+\frac{K_2-K_\sigma}{4\beta^2}\int_0^{s}\e^{\lambda_0 t}f'(0)K_b\|\tilde{Z}_t\|_{\Gamma^{r_0}}\d t\\
				&+\frac{K_2-K_\sigma}{4\beta^2}\int_0^{s}\e^{\lambda_0 t}f'(0)K_b\W_1^{\Gamma^{r_0}}(\mu_t,\nu_t)\d t+\frac{K_2-K_\sigma}{4\beta^2}\int_0^{s}\e^{\lambda_0 t}\ell( \varepsilon )\d t\\
				\nonumber&\leq \frac{K_2-K_\sigma}{\beta^2}\e^{\lambda_0r_0}f'(0)\|\tilde{Z}_0\|_{\Gamma^{r_0}}\\
				\nonumber&+\frac{K_2-K_\sigma}{2\beta^2}\e^{\lambda_0r_0}f'(0)K_b\int_0^{s}\e^{\lambda_0 t}\|\tilde{Z}_t\|_{\Gamma^{r_0}}\d t\\
				\nonumber&+\frac{K_2-K_\sigma}{2\beta^2}\e^{\lambda_0r_0}f'(0)K_b\int_0^{s}\e^{\lambda_0 t}\W_1^{\Gamma^{r_0}}(\mu_t,\nu_t)\d t+\frac{K_2-K_\sigma}{2\beta^2}\e^{\lambda_0r_0}\int_0^{s}\e^{\lambda_0 t}\ell( \varepsilon) \d t\\
				\nonumber&+\frac{K_2-K_\sigma}{4\beta^2}\e^{\lambda_0r_0}\frac{1}{r_0}\int_{(-r_0+s)\vee0}^sM_v\d v+\frac{K_2-K_\sigma}{4\beta^2}M_s,\ \ s\geq 0.
			\end{align}
Gronwall's inequality implies that
\begin{align*}
\E\e^{\lambda_0 t}\|\tilde{Z}_t\|_{\Gamma^{r_0}}&\leq \frac{K_2-K_\sigma}{\beta^2}\e^{\lambda_0r_0}f'(0) \e^{\frac{K_2-K_\sigma}{\beta^2}\e^{\lambda_0r_0}f'(0)K_b t}\E\|\tilde{Z}_0\|_{\Gamma^{r_0}}\\
&+\e^{\frac{K_2-K_\sigma}{\beta^2}\e^{\lambda_0r_0}f'(0)K_b t}\frac{K_2-K_\sigma}{2\beta^2}\e^{\lambda_0r_0}\int_0^{t}\e^{\lambda_0 v}\ell( \varepsilon) \d v,\ \ t\geq 0.
\end{align*}
Letting $\varepsilon\to0$, we derive from \eqref{lva} that
\begin{align*}
&\E\|\tilde{Z}_t\|_{\Gamma^{r_0}}\leq \frac{K_2-K_\sigma}{\beta^2}\e^{\lambda_0r_0}f'(0) \e^{\left\{-\lambda_0+\frac{K_2-K_\sigma}{\beta^2}\e^{\lambda_0r_0}f'(0)K_b\right\} t}\E\|\tilde{Z}_0\|_{\Gamma^{r_0}}.
\end{align*}
Taking infimum with respect to $\tilde{X}_0,Y_0$ with $\L_{\tilde{X}_0}=\mu_0, \L_{Y_0}=\nu_0$ and recalling $\delta=f'(0)=\int_0^\infty s\e^{\frac{1}{2\beta^2}\int_0^s\tilde{\gamma}(v)\d v}\d s$ and $\lambda_0=\frac{2\beta^2}{f'(0)}$, we get
\begin{align*}
\W_1^{\Gamma^{r_0}}(P_t^\ast\mu_0,P_t^\ast\nu_0)\leq  c\e^{-\lambda t}\W_1^{\Gamma^{r_0}}(\mu_0,\nu_0)
\end{align*}
for $c=\frac{K_2-K_\sigma}{\beta^2}\e^{\lambda_0r_0}\delta$ and $\lambda=\frac{K_2-K_\sigma}{\beta^2} \e^{\frac{2\beta^2}{\delta}r_0}\delta\left(\frac{2\beta^4}{\e^{\frac{2\beta^2}{\delta}r_0}(K_2-K_\sigma)\delta^2} -K_b\right)>0$ due to \eqref{kb-kd1}.
So, the proof is completed.
\end{proof}
\subsection{Uniform in time propagation of chaos for path dependent mean field interacting particle system}
Let
$N\ge1$ be an integer and $(X_0^i,W^{1,i}(t),W^{2,i}(t))_{1\le i\le N}$ be i.i.d.\,copies of $(X_0,W^1(t),W^2(t))$. For any $1\leq i\leq N$, consider
\begin{equation}\label{iip}
\d X^i(t)=b^{0}(X^i(t))\d t+\int_{\C}\tilde{b}(X_t^i,\xi)\L_{X_t^i}(\d \xi)\d t+\beta\d
W^{1,i}(t)+\sigma(X^i(t))\d W^{2,i}(t),
\end{equation}
here  $\tilde{b}:\C_\infty\times\C_\infty\to\R^d$ satisfying that there exists $K_b\geq0$ such that
\begin{align}\label{lia}&|\tilde{b}(\xi,\eta)-\tilde{b}(\tilde{\xi},\tilde{\eta})|\leq K_b(\|\xi-\tilde{\xi}\|_{\Gamma^{r_0}}+\|\eta-\tilde{\eta}\|_{\Gamma^{r_0}}),\ \ \xi,\tilde{\xi},\eta,\tilde{\eta}\in\mathscr C.
\end{align}
Consider the mean field interacting particle
system:
\begin{equation*}
\d X^{i,N}(t)=b^{0}(X^{i,N}(t))\d t+\frac{1}{N}\sum_{j=1}^N\tilde{b}(X_t^{i,N},X_t^{j,N})\d t+\beta\d
W^{1,i}(t)+\sigma(X^{i,N}(t))\d W^{2,i}(t),
\end{equation*}
where for any $1\leq i\leq N$, $X_0^{i,N}$ is $\C_\infty$-valued and $\F_0$-measurable random variable and the distribution of $(X_0^{i,N})_{1\leq i\leq N}$ is exchangeable.

For any exchangeable $\mu^N\in\scr P((\C_\infty)^N)$, $1\leq m\leq N$, let $(P_t^{m})^\ast\mu^N$
be the distribution of $(X_t^{i,N})_{1\leq i\leq m}$ with initial distribution $\mu^N$. Moreover, let $\mu^{\otimes m}$ denote the $m$ independent product of $\mu$, i.e. $\mu^{\otimes m}=\prod_{i=1}^m\mu$. For any $\xi=(\xi_1,\xi_2,\cdots,\xi_m), \tilde{\xi}=(\tilde{\xi}_1,\tilde{\xi}_2,\cdots,\tilde{\xi}_m)\in\scr \C^m$, define
$$\|\xi-\tilde{\xi}\|_{\tilde{\Gamma}^{r_0}}=\sum_{i=1}^m\|\xi_i-\tilde{\xi}_i\|_{\Gamma^{r_0}}.$$
Define the Wasserstein distance on $\scr P_1((\C_\infty)^m)$ induced by $\tilde{\Gamma}^{r_0}$ as
$$\W_1^{\tilde{\Gamma}^{r_0}}(\mu,\nu):= \inf_{\pi\in\textbf{C}(\mu,\nu)}\int_{\mathscr{C}^m\times \C^m}\|\xi-\tilde{\xi}\|_{\tilde{\Gamma}^{r_0}}\pi(\d \xi,\d \tilde{\xi}),\ \ \mu,\nu\in\scr P_1((\C_\infty)^m),$$
where $\textbf{C}(\mu,\nu)$ is the set of all couplings for $\mu$ and $\nu$.
\begin{thm}\label{POC} Assume {\bf (C)} with \eqref{lia} in place of \eqref{lip1a}. Let $\mu_0^N\in\scr P_{1}((\C_\infty)^N)$ be exchangeable,  $\mu_0\in \scr P_{2}(\C_\infty)$. Assume that
\begin{align}\label{kb-kd}
K_b<\min\left\{\frac{2\beta^4}{\e^{\frac{2\beta^2}{\delta}r_0}(K_2-K_\sigma)\delta^2},
\frac{2(K_2-K_\sigma)\e^{-2(K_2-K_\sigma)r_0}}{6}\right\}
 \end{align}
 for $\delta=\int_0^\infty s\e^{\frac{1}{2\beta^2}\int_0^s\{\gamma(v)+K_\sigma v\}\d v}\d s$.
 Then
\begin{align}\label{CMYa}\nonumber&\W_1^{\tilde{\Gamma}^{r_0}}((P_t^{m})^\ast\mu^N_0,(P_t^\ast\mu_0)^{\otimes m})\\
&\leq c\e^{-\lambda t}\frac{m}{N}\W_1^{\tilde{\Gamma}^{r_0}}(\mu_0^N,\mu_0^{\otimes N})+cm\{1+\{\mu_0(\|\cdot\|_{\Gamma^{r_0}}^{2})\}^{\frac{1}{2}}\}N^{-\frac{1}{2}},\ \ t\geq 0, 1\leq m\leq N
\end{align}
holds for some constants $c,\lambda>0$.

\end{thm}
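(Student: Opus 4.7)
The plan is to adapt the asymptotic reflection coupling from Theorem \ref{POC10} pairwise to the particle system. For each $1\le i\le N$, construct an asymptotic reflection coupling of $X^i$ and $X^{i,N}$ in the noise $W^{1,i}$ (with the truncation $\pi_R^\varepsilon$ and an auxiliary independent Brownian motion $\tilde{W}^{1,i}$), while keeping $W^{2,i}$ synchronous and taking $(X^i_0,X^{i,N}_0)_{i\le N}$ to be an optimal coupling for $\W_1^{\tilde{\Gamma}^{r_0}}(\mu_0^{\otimes N},\mu_0^N)$. Writing $Z^i(t):= X^i(t)-X^{i,N}(t)$, the drift of $Z^i$ decomposes into four pieces: the dissipative $b^{(0)}$-part (absorbed into $f$ via \eqref{fii}--\eqref{see} as in Theorem \ref{POC10}, yielding the contractive term $-\lambda_0 f(|Z^i|)$ up to $\ell(\varepsilon)$); a self-Lipschitz term $\frac{1}{N}\sum_j[\tilde{b}(X^{i,N}_t,X^{j,N}_t)-\tilde{b}(X^i_t,X^{j,N}_t)]$ bounded by $K_b\|Z^i_t\|_{\Gamma^{r_0}}$ via \eqref{lia}; an other-Lipschitz term $\frac{1}{N}\sum_j[\tilde{b}(X^i_t,X^{j,N}_t)-\tilde{b}(X^i_t,X^j_t)]$ bounded by $\frac{K_b}{N}\sum_j\|Z^j_t\|_{\Gamma^{r_0}}$; and the empirical fluctuation $\Phi_i(t):= \frac{1}{N}\sum_{j=1}^N\tilde{b}(X^i_t,X^j_t)-\int\tilde{b}(X^i_t,\xi)\mu_t(\d\xi)$ with $\mu_t=\L_{X^i_t}$.

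By independence of $(X^j)_{j\le N}$ and \eqref{lia}, a standard conditional-variance computation gives $\E|\Phi_i(t)|\le c K_b N^{-1/2}\{1+(\E\|X^1_t\|_{\Gamma^{r_0}}^2)^{1/2}\}$, so the uniform-in-time fluctuation rate $N^{-1/2}$ hinges on $\sup_{t\ge 0}\E\|X^1_t\|_{\Gamma^{r_0}}^2<\infty$. This is precisely the uniform second-moment estimate stated in Section 4 of the paper; the second half of \eqref{kb-kd}, namely $K_b<(K_2-K_\sigma)\e^{-2(K_2-K_\sigma)r_0}/3$, is the quantitative condition under which that moment bound goes through, the partially dissipative structure of $b^{(0)}$ dominating the delayed interaction. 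Applying It\^o--Tanaka to $f(|Z^i|)$ as in Theorem \ref{POC10} and summing over $i$, the self- and other-Lipschitz terms combine to a total delayed-Lipschitz coefficient $2K_b$ on $\sum_i\|Z^i_t\|_{\Gamma^{r_0}}$, since $\sum_i\frac{1}{N}\sum_j\|Z^j_t\|_{\Gamma^{r_0}}=\sum_j\|Z^j_t\|_{\Gamma^{r_0}}$; this matches the single-particle setup of Theorem \ref{POC10} where $\W_1^{\Gamma^{r_0}}(\mu_t,\nu_t)$ provided the extra $K_b$, so the contractivity condition \eqref{kb-kd1}, i.e.\ the first half of \eqref{kb-kd}, is exactly what is needed here.

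Pushing the shift-integral averaging \eqref{KGY} jointly through all $i$, taking expectation, and controlling $\sum_i f'(|Z^i|)\,\Phi_i(t)$ by the fluctuation bound yields, after Gronwall,
$$\E\sum_{i=1}^N\|Z^i_t\|_{\Gamma^{r_0}}\le c\e^{-\lambda t}\W_1^{\tilde{\Gamma}^{r_0}}(\mu_0^N,\mu_0^{\otimes N})+cN^{1/2}\{1+(\mu_0(\|\cdot\|_{\Gamma^{r_0}}^{2}))^{1/2}\}.$$
Restricting the coupling to the first $m$ components and using exchangeability of both $(X^{i,N})_{1\le i\le N}$ and $(X^i)_{1\le i\le N}$, so that the $m$-marginal cost equals $m/N$ of the full $N$-cost, then produces \eqref{CMYa}. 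The main obstacle is reconciling the non-Markovian $\Gamma^{r_0}$-norm with the LLN estimate uniformly in time: the shift-integral trick of \eqref{KGY} must be pushed through all $N$ pairwise couplings simultaneously without spoiling the exponential contraction, and the required moment bound $\sup_t\E\|X^1_t\|_{\Gamma^{r_0}}^2<\infty$ must itself be proved by a Gronwall argument in the integral norm rather than the supremum norm, which is why the second, sharper constraint in \eqref{kb-kd} is needed beyond the contractivity condition \eqref{kb-kd1} already used in Theorem \ref{POC10}.
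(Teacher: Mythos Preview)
Your proposal is correct and follows essentially the same approach as the paper: pairwise asymptotic reflection coupling in the $W^{1,i}$-noise, the same concave $f$ and shift-integral averaging from Theorem~\ref{POC10} applied jointly over all $i$, the telescoping of the $\tilde b$-drift into self-Lipschitz, other-Lipschitz, and empirical-fluctuation pieces (the paper collapses the first two into a single Lipschitz estimate but the resulting bound $K_b\|\tilde Z^{i,N}_t\|_{\Gamma^{r_0}}+K_b\frac{1}{N}\sum_j\|\tilde Z^{j,N}_t\|_{\Gamma^{r_0}}$ is identical), and the uniform second-moment bound of Theorem~\ref{UEt} to control the fluctuation by $N^{-1/2}$ uniformly in time. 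Your identification of the roles of the two halves of \eqref{kb-kd} and the final exchangeability reduction $\W_1^{\tilde\Gamma^{r_0}}((P_t^m)^*\mu_0^N,(P_t^*\mu_0)^{\otimes m})\le\frac{m}{N}\W_1^{\tilde\Gamma^{r_0}}((P_t^N)^*\mu_0^N,(P_t^*\mu_0)^{\otimes N})$ match the paper exactly.
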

\begin{proof}
Simply denote $\mu_t=P_t^\ast\mu_0$.
Let $(\{\tilde{W}^{1,i}(t)\}_{t\geq 0})_{1\leq i\leq N}$ be independent $d$-dimensional Brownian motions independent of $(\{W^{1,i}(t),W^{2,i}(t)\}_{t\geq 0})_{1\leq i\leq N}$. Construct
\begin{equation*}\begin{split}
\d \tilde{X}^{i,N}(t)&=b^{(0)}(\tilde{X}^{i,N}(t))\d t+\int_{\C}\tilde{b}(\tilde{X}^{i,N}_t,\xi)\mu_t(\d \xi)\d t+\sigma(\tilde{X}^{i,N}(t))\d W^{2,i}(t)\\
&+\beta\pi_R^\varepsilon(|\tilde{Z}^{i,N}(t)|)\d
W^{1,i}(t)+\beta\pi_S^\varepsilon(|\tilde{Z}^{i,N}(t)|)\d \tilde{W}^{1,i}(t),
\end{split}\end{equation*}
and
\begin{equation*}\begin{split}
\d Y^{i,N}(t)&=b^{(0)}(Y^{i,N}(t))\d t+\frac{1}{N}\sum_{j=1}^N\tilde{b}(Y^{i,N}_t,Y^{j,N}_t)\d t+\sigma(Y^{i,N}(t))\d W^{2,i}(t)\\
&+\beta\pi_R^\varepsilon(|\tilde{Z}^{i,N}(t)|)(I_{d\times d}-2\tilde{U}^{i,N}_t\otimes \tilde{U}^{i,N}_t)\d
W^{1,i}(t)+\beta\pi_S^\varepsilon(|\tilde{Z}^{i,N}(t)|)\d \tilde{W}^{1,i}(t),
\end{split}\end{equation*}
where  $\tilde{Z}^{i,N}(t)=\tilde{X}^{i,N}(t)-Y^{i,N}(t)$, $\tilde{U}^{i,N}_t=\frac{\tilde{Z}^{i,N}(t)}{|\tilde{Z}^{i,N}(t)|}1_{\{|\tilde{Z}^{i,N}(t)|\neq 0\}}$ and $\L_{(\tilde{X}^{i,N}_0)_{1\leq i\leq N}}=\mu_0^{\otimes N}$ and $\L_{(Y^{i,N}_0)_{1\leq i\leq N}}=\mu_0^N$.
By the It\^{o}-Tanaka formula, \eqref{bslipsg}, \eqref{pdi} and \eqref{lia}, we have
\begin{align*}
\d |\tilde{Z}^{i,N}(t)|&\leq \gamma(|\tilde{Z}^{i,N}(t)|)\d t+K_\sigma|\tilde{Z}^{i,N}(t)|\d t \\
&+\left|\frac{1}{N}\sum_{j=1}^N\tilde{b}(\tilde{X}^{i,N}_t,\tilde{X}^{j,N}_t)-\int_{\C}\tilde{b}(\tilde{X}^{i,N}_t,\xi)\mu_t(\d \xi)\right|\d t\\
&+K_b\|\tilde{Z}^{i,N}_t\|_{\Gamma^{r_0}}\d t+K_b\frac{1}{N}\sum_{j=1}^N\|\tilde{Z}^{j,N}_t\|_{\Gamma^{r_0}}\d t\\
&+\left\<[\sigma(\tilde{X}^{i,N}(t))-\sigma(Y^{i,N}(t))]\d W^{2,i}(t),\frac{\tilde{Z}^{i,N}(t)}{|\tilde{Z}^{i,N}(t)|}1_{\{|\tilde{Z}^{i,N}(t)|\neq 0\}}\right\>\\
&+ 2\beta\pi_R^\varepsilon(|\tilde{Z}^{i,N}(t)|)\left\<\frac{\tilde{Z}^{i,N}(t)}{|\tilde{Z}^{i,N}(t)|}1_{\{|\tilde{Z}^{i,N}(t)|\neq 0\}},\d W^{1,i}(t)\right\>.
\end{align*}
Recall $$\tilde{\gamma}(v)=\gamma(v)+K_{\sigma}v,\ \ v\geq 0,$$ and
$$f(r)=\int_0^r\e^{-\frac{1}{2\beta^2}\int_0^u\tilde{\gamma}(v)\d v}\int_u^\infty s\e^{\frac{1}{2\beta^2}\int_0^s\tilde{\gamma}(v)\d v}\d s\d u,\ \ r\geq 0.$$
By It\^{o}'s formula and \eqref{fii}, we derive
\begin{align}\label{itf1}
\nonumber\d f(|\tilde{Z}^{i,N}(t)|)&\leq f'(|\tilde{Z}^{i,N}(t)|)K_b\|\tilde{Z}^{i,N}_t\|_{\Gamma^{r_0}}\d t+f'(|\tilde{Z}^{i,N}(t)|)K_b\frac{1}{N}\sum_{j=1}^N\|\tilde{Z}^{j,N}_t\|_{\Gamma^{r_0}}\d t\\
\nonumber&+f'(|\tilde{Z}^{i,N}(t)|)\left|\frac{1}{N}\sum_{j=1}^N\tilde{b}(\tilde{X}^{i,N}_t,\tilde{X}^{j,N}_t)-\int_{\C}\tilde{b}(\tilde{X}^{i,N}_t,\xi)\mu_t(\d \xi)\right|\d t\\
\nonumber&+f'(|\tilde{Z}^{i,N}(t)|)\tilde{\gamma}(|\tilde{Z}^{i,N}(t)|)\d t+2\beta^2 f''(|\tilde{Z}^{i,N}(t)|)\pi_R^\varepsilon(|\tilde{Z}^{i,N}(t)|)^2\d t\\
&+f'(|\tilde{Z}^{i,N}(t)|)\left\<[\sigma(\tilde{X}^{i,N}(t))-\sigma(Y^{i,N}(t))]\d W^{2,i}(t),\frac{\tilde{Z}^{i,N}(t)}{|\tilde{Z}^{i,N}(t)|}1_{\{|\tilde{Z}^{i,N}(t)|\neq 0\}}\right\>\\
\nonumber&+f'(|\tilde{Z}^{i,N}(t)|) 2\beta\pi_R^\varepsilon(|\tilde{Z}^{i,N}(t)|)\left\<\frac{\tilde{Z}^{i,N}(t)}{|\tilde{Z}^{i,N}(t)|}1_{\{|\tilde{Z}^{i,N}(t)|\neq 0\}},\d W^{1,i}(t)\right\>.
\end{align}
Similar to \eqref{MKG}, we conclude that
\begin{align}\label{myt}
&f'(|\tilde{Z}^{i,N}(t)|)\tilde{\gamma}(|\tilde{Z}^{i,N}(t)|)+2\beta^2 f''(|\tilde{Z}^{i,N}(t)|)\pi_R^\varepsilon(|\tilde{Z}^{i,N}(t)|)^2\leq -2\beta^2 |\tilde{Z}^{i,N}(t)|+\ell(\varepsilon)
\end{align}
for
\begin{align*}\ell(\varepsilon):=2\beta^2 \varepsilon +f'(0)\left\{\sup_{s\in[0,\varepsilon]}\gamma^{+}(s)+K_\sigma\varepsilon\right\}.
\end{align*}
			Let $$\Delta_t^{i,N}=\left|\frac{1}{N}\sum_{j=1}^N\tilde{b}(\tilde{X}^{i,N}_t,\tilde{X}^{j,N}_t)-\int_{\C}\tilde{b}(\tilde{X}^{i,N}_t,\xi)\mu_t(\d \xi)\right|.$$
			Recall $\lambda_0=\frac{2\beta^2}{f'(0)}$.
It follows from \eqref{cop},  \eqref{itf1} and \eqref{myt} that
\begin{align*}
\d [\e^{\lambda_0 t}f(|\tilde{Z}^{i,N}(t)|)]
&\leq \e^{\lambda_0 t}f'(0)K_b\|\tilde{Z}^{i,N}_t\|_{\Gamma^{r_0}}\d t+\e^{\lambda_0 t}f'(0)K_b\frac{1}{N}\sum_{j=1}^N\|\tilde{Z}^{j,N}_t\|_{\Gamma^{r_0}}\d t\\
&+\e^{\lambda_0 t}\ell(\varepsilon)\d t+\d M_t^{i,N}+\e^{\lambda_0 t}f'(0)\Delta_t^{i,N}\d t
\end{align*}
for \begin{align*}\d M_t^{i,N}&=\e^{\lambda_0 t}f'(|\tilde{Z}^{i,N}(t)|)\left\<[\sigma(\tilde{X}^{i,N}(t))-\sigma(Y^{i,N}(t))]\d W^{2,i}(t),\frac{\tilde{Z}^{i,N}(t)}{|\tilde{Z}^{i,N}(t)|}1_{\{|\tilde{Z}^{i,N}(t)|\neq 0\}}\right\>\\
\nonumber&+\e^{\lambda_0 t}f'(|\tilde{Z}^{i,N}(t)|) 2\beta\pi_R^\varepsilon(|\tilde{Z}^{i,N}(t)|)\left\<\frac{\tilde{Z}^{i,N}(t)}{|\tilde{Z}^{i,N}(t)|}1_{\{|\tilde{Z}^{i,N}(t)|\neq 0\}},\d W^{1,i}(t)\right\>,\ \ t\geq 0.
\end{align*}
By the same argument to derive \eqref{KGY}, we arrive at
\begin{align*}
&\e^{\lambda_0 s}\|\tilde{Z}^{i,N}_s\|_{\Gamma^{r_0}}\\
&\leq \frac{K_2-K_\sigma}{\beta^2}\e^{\lambda_0r_0}f'(0)\|\tilde{Z}^{i,N}_0\|_{\Gamma^{r_0}}\\
&+\frac{K_2-K_\sigma}{2\beta^2}\e^{\lambda_0r_0}f'(0)K_b\int_0^{s}\e^{\lambda_0 t}\|\tilde{Z}^{i,N}_t\|_{\Gamma^{r_0}}\d t\\
&+\frac{K_2-K_\sigma}{2\beta^2}\e^{\lambda_0r_0}f'(0)K_b\int_0^{s}\e^{\lambda_0 t}\frac{1}{N}\sum_{j=1}^N\|\tilde{Z}^{j,N}_t\|_{\Gamma^{r_0}}\d t\\
\nonumber&+\frac{K_2-K_\sigma}{2\beta^2}\e^{\lambda_0r_0}\int_0^{s}\e^{\lambda_0 t}f'(0)\Delta_t^{i,N}\d t+\frac{K_2-K_\sigma}{2\beta^2}\e^{\lambda_0r_0}\int_0^{s}\e^{\lambda_0 t}\ell( \varepsilon) \d t\\
&+\frac{K_2-K_\sigma}{4\beta^2}\e^{\lambda_0r_0}\frac{1}{r_0}\int_{(-r_0+s)\vee0}^sM_v^{i,N}\d v+\frac{K_2-K_\sigma}{4\beta^2}M_s^{i,N},\ \ s\geq 0.
\end{align*}
Applying Gronwall's inequality, we obtain
\begin{align*}
\E\e^{\lambda_0 t}\sum_{i=1}^N\|\tilde{Z}_t^{i,N}\|_{\Gamma^{r_0}}&\leq \frac{K_2-K_\sigma}{\beta^2}\e^{\lambda_0r_0}f'(0) \e^{\frac{K_2-K_\sigma}{\beta^2}\e^{\lambda_0r_0}f'(0)K_b t}\E\sum_{i=1}^N\|\tilde{Z}_0^{i,N}\|_{\Gamma^{r_0}}\\
&+\sum_{i=1}^N\frac{K_2-K_\sigma}{2\beta^2}\e^{\lambda_0r_0}\int_0^{t}\e^{\lambda_0 v}f'(0)\E\Delta_v^{i,N} \d v\\
&+\int_0^t\frac{K_2-K_\sigma}{\beta^2}\e^{\lambda_0r_0}f'(0)K_b\e^{\frac{K_2-K_\sigma}{\beta^2}\e^{\lambda_0r_0}f'(0)K_b (t-s)}\\
&\qquad\qquad\qquad\times\sum_{i=1}^N\frac{K_2-K_\sigma}{2\beta^2}\e^{\lambda_0r_0}\int_0^{s}\e^{\lambda_0 v}f'(0)\E\Delta_v^{i,N} \d v\\
&+N\e^{\frac{K_2-K_\sigma}{\beta^2}\e^{\lambda_0r_0}f'(0)K_b t}\frac{K_2-K_\sigma}{2\beta^2}\e^{\lambda_0r_0}\int_0^{t}\e^{\lambda_0 v}\ell( \varepsilon) \d v,\ \ t\geq 0.
\end{align*}
			Letting $\varepsilon\to0$ and observing \eqref{lva}, we have
\begin{align*}
\sum_{i=1}^N\E\|\tilde{Z}_t^{i,N}\|_{\Gamma^{r_0}}&\leq \frac{K_2-K_\sigma}{\beta^2}\e^{\lambda_0r_0}f'(0) \e^{\left\{-\lambda_0+\frac{K_2-K_\sigma}{\beta^2}\e^{\lambda_0r_0}f'(0)K_b\right\} t}\E\sum_{i=1}^N\|\tilde{Z}_0^{i,N}\|_{\Gamma^{r_0}}\\
&+\sum_{i=1}^N\frac{K_2-K_\sigma}{2\beta^2}\e^{\lambda_0r_0}\int_0^{t}\e^{-\lambda_0(t-v)}f'(0)\E\Delta_v^{i,N} \d v\\
&+\int_0^t\frac{K_2-K_\sigma}{\beta^2}\e^{\lambda_0r_0}f'(0)K_b\e^{\left\{-\lambda_0+\frac{K_2-K_\sigma}{\beta^2}\e^{\lambda_0r_0}f'(0)K_b \right\}(t-s)}\\
&\qquad\qquad\qquad\times\sum_{i=1}^N\frac{K_2-K_\sigma}{2\beta^2}\e^{\lambda_0r_0}\int_0^{s}\e^{-\lambda_0(s- v)}f'(0)\E\Delta_v^{i,N} \d v.
\end{align*}
By \cite[Lemma 2.1]{H2023+}, \eqref{kb-kd} and Theorem \ref{UEt} below, we can find constants $C_0,C_1>0$ such that
\begin{equation}\label{w2}\begin{split}
&\E\Delta_t^{i,N}\leq C_0(1+(\E\|\tilde{X}^{i,N}_t\|_{\Gamma^{r_0}}^2)^{\frac{1}{2}})N^{-\frac{1}{2}}\le C_1\{1+\{\mu_0(\|\cdot\|_{\Gamma^{r_0}}^{2})\}^{\frac{1}{2}}\}N^{-\frac{1}{2}},\ \ t\geq 0.
\end{split}\end{equation}
Taking infimum with respect to $(\tilde{X}^{i,N}_0)_{1\leq i\leq N}, (Y^{i,N}_0)_{1\leq i\leq N}$ with $\L_{(\tilde{X}^{i,N}_0)_{1\leq i\leq N}}=\mu_0^{\otimes N}$ and $\L_{(Y^{i,N}_0)_{1\leq i\leq N}}=\mu_0^N$ and recalling $\delta=f'(0)=\int_0^\infty s\e^{\frac{1}{2\beta^2}\int_0^s\tilde{\gamma}(v)\d v}\d s$ and $\lambda_0=\frac{2\beta^2}{f'(0)}$, we get from \eqref{w2} that 
\begin{align*}
&\W_1^{\tilde{\Gamma}^{r_0}}((P_t^{N})^\ast\mu^N_0,(P_t^\ast\mu_0)^{\otimes N})\leq c\e^{-\lambda t}\W_1^{\tilde{\Gamma}^{r_0}}(\mu_0^N,\mu_0^{\otimes N})+C_2\{1+\{\mu_0(\|\cdot\|_{\Gamma^{r_0}}^{2})\}^{\frac{1}{2}}\}N^{\frac{1}{2}},\ \ t\geq 0
\end{align*}
for some constat $C_2>0$, $c=\frac{K_2-K_\sigma}{\beta^2}\e^{\lambda_0r_0}\delta$ and $\lambda=\frac{K_2-K_\sigma}{\beta^2} \e^{\frac{2\beta^2}{\delta}r_0}\delta\left(\frac{2\beta^4}{\e^{\frac{2\beta^2}{\delta}r_0}(K_2-K_\sigma)\delta^2} -K_b\right)>0$ due to \eqref{kb-kd}.
Combining this with the fact
\begin{align*}
&\W_1^{\Gamma^{r_0}}((P_t^{m})^\ast\mu^N_0,(P_t^\ast\mu_0)^{\otimes m})\leq \frac{m}{N}\W_1^{\Gamma^{r_0}}((P_t^{N})^\ast\mu^N_0,(P_t^\ast\mu_0)^{\otimes N}),\ \ 1\leq m\leq N,t\geq 0,
\end{align*}
we derive \eqref{CMYa} and the proof is completed.
\end{proof}
\section{Appendix}
\subsection{Well-posedness of path dependent McKean-Vlasov SDEs}
In this section, we consider the following path dependent McKean-Vlasov SDEs:
\beq\label{EQ3}\d X(t)=F(X_t,\L_{X_t})\d t+G(X_t,\L_{X_t})\d W(t),\end{equation}
here $W$ is an $n$-dimensional Brownian motion, $F:\C_\infty\times\scr P(\C_\infty)\to\R^d$, $G: \C_\infty\times\scr P(\C_\infty)\to\R^d\otimes\R^n$ are measurable and bounded on bounded sets. To derive the well-posedness of \eqref{EQ3}, we make the following assumptions.
\begin{enumerate}
	\item[\bf{(H)}] There exists $k\geq1 $ such that $F$ is continuous in $\C_\infty\times \mathscr{P}_k(\C_\infty)$, and there exists a constant $K\geq 0$ such that for any $\xi,\eta\in\C$ and $\mu,\nu\in\mathscr{P}_k(\C_\infty)$,
\begin{equation*}\begin{aligned}
	&\ \ \ \ 2 \<F(\xi,\mu)-F(\eta,\nu), \xi(0)-\eta(0) \>\leq K\W^{\C_\infty}_k(\mu,\nu)^2 + K\|\xi-\eta\|_{\infty}^2,
	\end{aligned}\end{equation*}
\begin{equation*}\begin{aligned}
	&\|G(\xi,\mu)-G(\eta,\nu)\|_{HS}^2\leq K\W^{\C_\infty}_k(\mu,\nu)^2 + K\|\xi-\eta\|_{\infty}^2.
	\end{aligned}\end{equation*}
\end{enumerate}
\begin{thm}\label{wep} Assume {\bf(H)}. Then \eqref{EQ3} is well-posed in $\mathscr{P}_k(\C_\infty)$.
\end{thm}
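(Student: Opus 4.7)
The plan is to prove well-posedness via a Picard-type fixed point argument on the space of measure flows. Fix $T>0$ and define a map $\Phi$ on $C([0,T];\scr P_k(\C_\infty))$ as follows: for an input flow $\mu_\cdot$, solve the \emph{frozen} (classical, distribution-independent) path dependent SDE
$$\d X^\mu(t) = F(X_t^\mu,\mu_t)\d t + G(X_t^\mu,\mu_t)\d W(t),\qquad X_0^\mu=X_0,$$
whose coefficients depend only on time and path. Under \textbf{(H)}, for each $t$ the maps $\xi\mapsto F(\xi,\mu_t)$ and $\xi\mapsto G(\xi,\mu_t)$ are continuous, satisfy a one-sided Lipschitz / Lipschitz condition in the uniform path norm, and are locally bounded; classical results on path dependent SDEs under one-sided Lipschitz coefficients (see e.g.\ \cite{28, SWY, 29}) then furnish a unique strong solution with $\E\sup_{t\in[0,T]}\|X_t^\mu\|_\infty^k<\infty$, the moment bound obtained via a standard Gronwall estimate driven by the monotonicity condition. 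Setting $\Phi(\mu_\cdot)_t:=\L_{X_t^\mu}$ defines a map of $C([0,T];\scr P_k(\C_\infty))$ to itself; continuity of $t\mapsto\L_{X_t^\mu}$ in $\W_k^{\C_\infty}$ follows from the uniform moment bound and pathwise continuity of $X^\mu$.

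To exhibit a fixed point, take two input flows $\mu_\cdot,\nu_\cdot$ and couple $X^\mu,X^\nu$ via the same initial value and driving Brownian motion. By It\^o's formula combined with the monotonicity of $F$ and the Hilbert-Schmidt estimate on $G$ from \textbf{(H)},
$$\d|X^\mu(t)-X^\nu(t)|^2 \leq 2K\bigl(\W_k^{\C_\infty}(\mu_t,\nu_t)^2+\|X_t^\mu-X_t^\nu\|_\infty^2\bigr)\d t+\d M(t),$$
for a local martingale $M$. Taking $\sup_{s\in[0,t]}$, applying the BDG inequality to absorb the martingale supremum into the left-hand side, and using that $\|X_t^\mu-X_t^\nu\|_\infty=\sup_{s\in[-r_0,t]}|X^\mu(s)-X^\nu(s)|$ since the initial segments agree, a Gronwall argument yields
$$\E\|X_t^\mu-X_t^\nu\|_\infty^2 \leq C_T\int_0^t\W_k^{\C_\infty}(\mu_s,\nu_s)^2\d s.$$
For general $k\geq 2$ one applies the same scheme to $(|X^\mu-X^\nu|^2+\varepsilon)^{k/2}$ and sends $\varepsilon\to 0$ to obtain the analogous $L^k$-bound; for $k\in[1,2)$ interpolation with the uniform higher-moment bound suffices. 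Consequently
$$\W_k^{\C_\infty}\bigl(\Phi(\mu_\cdot)_t,\Phi(\nu_\cdot)_t\bigr)^k \leq C_T\int_0^t\W_k^{\C_\infty}(\mu_s,\nu_s)^k\d s.$$

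Equipping $C([0,T];\scr P_k(\C_\infty))$ with the weighted metric $d_\lambda(\mu_\cdot,\nu_\cdot):=\sup_{t\in[0,T]}\e^{-\lambda t}\W_k^{\C_\infty}(\mu_t,\nu_t)$ for $\lambda$ sufficiently large turns $\Phi$ into a strict contraction, so Banach's fixed point theorem produces a unique $\mu_\cdot^*$ with $\Phi(\mu_\cdot^*)=\mu_\cdot^*$; this $\mu_\cdot^*$ is precisely the law flow of the unique strong solution on $[0,T]$. Iterating on $[nT,(n+1)T]$ using the $k$-th moment bound on $X_{nT}^{\mu^*}$ as new initial data yields global well-posedness in $\scr P_k(\C_\infty)$, and weak well-posedness follows from strong existence combined with pathwise uniqueness (Yamada-Watanabe).

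The principal technical obstacle is the mismatch between the pointwise inner product that appears in the monotonicity hypothesis and the path-supremum norm that appears on its right-hand side: It\^o's formula only controls $|X(t)-Y(t)|^2$ pointwise, so producing a closed delay-type Gronwall inequality requires upgrading to an $\|X_t-Y_t\|_\infty^2$-estimate via BDG at every step, and similarly for the $k$-th moment a priori bound on $\E\sup_{t\in[0,T]}\|X_t^\mu\|_\infty^k$. Once the sup-norm Gronwall is in place, the remainder of the argument is standard.
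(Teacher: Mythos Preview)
Your proposal is correct and follows essentially the same approach as the paper: freeze the measure flow, solve the resulting classical path-dependent SDE via the one-sided Lipschitz theory of \cite{28}, establish a contraction estimate for the map $\Phi$ through It\^o's formula plus BDG and Gronwall, and conclude by Banach's fixed point theorem in an exponentially weighted sup-metric on $C([0,T];\scr P_k(\C_\infty))$. The paper carries all estimates conditionally on $\F_0$ and passes from $p=2$ to general $p=k$ by Jensen's inequality, which is precisely your ``interpolation'' step for $k\in[1,2)$; apart from this cosmetic difference the two arguments coincide.
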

\begin{proof} Fix $T>0$. Let $X_0$ be an $\C_\infty$-valued and $\F_0$-measurable random variable with $\L_{X_0}\in\scr P_k(\C_\infty)$. For $\mu\in C([0,T]; \scr P_k(\C_\infty))$, $t\in[0,T], \xi\in\C$, let $F^\mu(t,\xi) =F(\xi,\mu_t)$, $G^\mu(t,\xi)=G(\xi,\mu_t)$.  Consider
\beq\label{EW1} \d X^\mu(t)=F^\mu(t,X^\mu_t)\d t +G^\mu(t,X^\mu_t)\d W(t),\ \  t\in [0,T].\end{equation}
Let $\mathbf{0}(s)=(0,0,\cdots,0)\in\R^d, s\in[-r_0,0]$. It follows from {\bf (H)} that
\begin{equation*}\begin{split}
	&2 \<F^\mu(t,\xi)-F^\mu(t,\eta), \xi(0)-\eta(0) \>\leq K\|\xi-\eta\|_{\infty}^2,\\
	&\|G^\mu(t,\xi)-G^\mu(t,\eta)\|_{HS}^2\leq K\|\xi-\eta\|_{\infty}^2,\ \ t\in[0,T], \xi,\eta\in\C,
\end{split}\end{equation*}
and
\begin{align}\label{coe}
\nonumber2\<F^\mu(t,\xi),\xi(0)\>
&\leq 2\<F^\mu(t,\xi)-F(\mathbf{0},\delta_\mathbf{0}),\xi(0)\>+|F(\mathbf{0},\delta_\mathbf{0})||\xi(0)|\\
&\leq K(\|\xi\|_\infty^2+\mu_t(\|\cdot\|_\infty^k)^{\frac{2}{k}})+|F(\mathbf{0},\delta_\mathbf{0})||\xi(0)|,\ \ t\in[0,T], \xi\in\C,
\end{align}
\begin{align}\label{cof}
\nonumber\|G^\mu(t,\xi)\|_{HS}^2
&\leq 2\|G^\mu(t,\xi)-G(\mathbf{0},\delta_\mathbf{0})\|_{HS}^2 +2\|G(\mathbf{0},\delta_\mathbf{0})\|_{HS}^2\\
&\leq 2K(\|\xi\|_\infty^2+\mu_t(\|\cdot\|_\infty^k)^{\frac{2}{k}})+2\|G(\mathbf{0},\delta_\mathbf{0})\|_{HS}^2,\ \ t\in[0,T], \xi\in\C.
\end{align}
By \cite[Theorem 2.3]{28}, \eqref{EW1} is strongly well-posed. We should remark that although \cite[Theorem 2.3]{28} only considers the time-homogeneous case, there is no essential difference for the time-inhomogeneous situation. Let $\Phi_t(\mu)=\L_{X^\mu_t}, t\in[0,T]$ for $(X^\mu(t))_{t\in[-r_0,T]}$ solving  \eqref{EW1} with initial value $X_0$. Moreover, for any $p>0$, there exists a constant $C_{T,p}>0$ such that
\begin{align}\label{MoE}
\E\left(\sup_{t\in[-r_0,T]}|X^\mu(t)|^p\bigg|\F_0\right)\leq C_{T,p}\left(1+\|X_0\|_\infty^p+\left(\int_0^T\mu_t(\|\cdot\|_\infty^k)^{\frac{2}{k}}\d t\right)^\frac{p}{2}\right).
\end{align}
In fact, by Jensen's inequality, it is sufficient to prove \eqref{MoE} for $p\geq 2$. By It\^{o}'s formula and \eqref{coe}-\eqref{cof}, it holds
\begin{align}\label{Itf}
\nonumber\d |X^\mu(t)|^2&=2\<F^\mu(t,X^\mu_t), X^\mu(t)\>\d t+\|G^\mu(t,X^\mu_t)\|_{HS}^2\d t+2\<G^\mu(t,X^\mu_t)\d W(t),X^\mu(t)\>\\
&\leq 3K(\|X^\mu_t\|_\infty^2+\mu_t(\|\cdot\|_\infty^k)^{\frac{2}{k}})\d t+|F(\mathbf{0},\delta_\mathbf{0})|| X^\mu(t)|\d t+2\|G(\mathbf{0},\delta_\mathbf{0})\|_{HS}^2\d t\\
\nonumber&+2\<G^\mu(t,X^\mu_t)\d W(t),X^\mu(t)\>,\ \ t\in [0,T].
\end{align}
For any $R>0$, let $\tau_R=\inf\{t\geq 0: |X^\mu(t)|\geq R\}$. Note that for any $p\geq 2$, it follows from BDG's inequality and \eqref{cof} that
\begin{align}\label{TGY}\nonumber&\E\left(\left|\int_0^{s\wedge \tau_R}2\<G^\mu(t,X^\mu_t)\d W(t),X^\mu(t)\>\right|^{\frac{p}{2}}\Big|\F_0\right)\\
\nonumber&\leq \kappa(p)\E\left(\left(\int_0^{s\wedge \tau_R}4\|G^\mu(t,X^\mu_t)\|_{HS}^2|X^\mu(t)|^2\d t\right)^\frac{p}{4}\Big|\F_0\right)\\
&\leq \frac{1}{2}\E\left(\sup_{t\in[-r_0,s\wedge \tau_R]}|X^\mu(t)|^p\Big|\F_0\right)\\
\nonumber&+C_p\int_0^s\left(1+\E\left(\sup_{v\in[-r_0,t\wedge \tau_R]}|X^\mu(v)|^p\Big|\F_0\right)\right)\d t+C_p\left(\int_0^s\mu_t(\|\cdot\|_\infty^k)^{\frac{2}{k}}\d t\right)^{\frac{p}{2}}, \ \ s\geq 0.
\end{align}
Therefore, it follows from \eqref{Itf},  \eqref{TGY} and Gronwall's inequality that
\begin{align}\label{MoE1}
\E\left(\sup_{t\in[-r_0,T\wedge \tau_R]}|X^\mu(t)|^p\bigg|\F_0\right)\leq C_{T,p}\left(1+\|X_0\|_\infty^p+\left(\int_0^T\mu_t(\|\cdot\|_\infty^k)^{\frac{2}{k}}\d t\right)^\frac{p}{2}\right),
\end{align}
which implies $\lim_{R\to\infty}\P(\tau_R<T|\F_0)=0$. Letting $R\to\infty$ in \eqref{MoE1} and applying Fatou's lemma, we obtain \eqref{MoE} immediately.

Next, for any $\mu,\nu\in C([0,T]; \scr P_k(\C_\infty))$, It\^{o}'s formula implies
\begin{align*}
\d |X^\mu(t)-X^\nu(t)|^2&=2\<F^\mu(t,X^\mu_t)-F^\nu(t,X^\nu_t), X^\mu(t)-X^\nu(t)\>\d t\\
 &+\|G^\mu(t,X^\mu_t)-G^\nu(t,X^\nu_t)\|_{HS}^2\d t+\d M_t\\
 &\leq 2K(\|X^\mu_t-X^\nu_t\|_\infty^2+\W^{\C_\infty}_k(\mu_t,\nu_t)^2)\d t+\d M_t,\ \  t\in [0,T],
\end{align*}
where $$\d M_t=2\<[G^\mu(t,X^\mu_t)-G^\nu(t,X^\nu_t)]\d W(t),X^\mu(t)-X^\nu(t)\>.$$
Note that
$$\<M\>_s\leq \int_0^s|X^\mu(t)-X^\nu(t)|^2K(\|X^\mu_t-X^\nu_t\|_\infty^2+\W^{\C_\infty}_k(\mu_t,\nu_t)^2)\d t.$$
For any $p\geq 2$, it follows from BDG's inequality that
\begin{align*}
&\E\left(\sup_{s\in[0,u]}|M_s|^{\frac{p}{2}}\big|\F_0\right)\\
&\leq c_p\E\left\{\left(\int_0^u|X^\mu(t)-X^\nu(t)|^2K(\|X^\mu_t-X^\nu_t\|_\infty^2+\W^{\C_\infty}_k(\mu_t,\nu_t)^2)\d t\right)^{\frac{p}{4}}\bigg|\F_0\right\}\\
&\leq \frac{1}{2}\E\left(\sup_{t\in[0,u]}|X^\mu(t)-X^\nu(t)|^{p}\big|\F_0\right)+ \tilde{c}_{p,T}\int_0^u\E(\|X^\mu_t-X^\nu_t\|_\infty^{p}|\F_0)\d t\\
&+\tilde{c}_{p,T}\left(\int_0^u\W^{\C_\infty}_k(\mu_t,\nu_t)^{2}\d t\right)^{\frac{p}{2}}.
\end{align*}
As a result, Gronwall's inequality gives
\begin{align}\label{kgt}\nonumber\E(\|X^\mu_s-X^\nu_s\|^{p}_\infty|\F_0)&\leq \E\left(\sup_{t\in[-r_0,s]}|X^\mu(t)-X^\nu(t)|^{p}\big|\F_0\right)\\
&\leq C_{p,T}\left(\int_0^s\W^{\C_\infty}_k(\mu_t,\nu_t)^{2}\d t\right)^{\frac{p}{2}}.
\end{align}
Jensen's inequality yields that \eqref{kgt} holds for any $p>0$. Taking $p=k$, we conclude that
\begin{align*}\W_k^{\C_\infty}(\Phi_s(\mu),\Phi_s(\nu))\leq \left(\E(\|X^\mu_s-X^\nu_s\|^{k}_\infty)\right)^{\frac{1}{k}}\leq C_{k,T}\left(\int_0^s\W^{\C_\infty}_k(\mu_t,\nu_t)^{2}\d t\right)^{\frac{1}{2}},\ \ s\in[0,T].
\end{align*}
For any $\eta>0$, it holds
\begin{align*}\sup_{s\in[0,T]}\e^{-\eta s}\W_k^{\C_\infty}(\Phi_s(\mu),\Phi_s(\nu))&\leq C_{k,T}\sup_{s\in[0,T]}\e^{-\eta s}\W^{\C_\infty}_k(\mu_s,\nu_s)\left(\int_0^s\e^{-2\eta (s-t)}\d t\right)^{\frac{1}{2}}\\
&\leq C_{k,T}\frac{\sqrt{2}}{2}\eta^{-\frac{1}{2}}\sup_{s\in[0,T]}\e^{-\eta s}\W^{\C_\infty}_k(\mu_s,\nu_s).
\end{align*}
Taking $\eta_0=2 C_{K,T}^2$, we arrive at
\begin{align}\label{cot}\sup_{s\in[0,T]}\e^{-\eta_0 s}\W_k^{\C_\infty}(\Phi_s(\mu),\Phi_s(\nu))\leq \frac{1}{2}\sup_{s\in[0,T]}\e^{-\eta_0 s}\W^{\C_\infty}_k(\mu_s,\nu_s).
\end{align}
Set
$$E_{0}:= \big\{\mu\in C([0,T]; \scr P_k(\C_\infty)):\mu_0=\L_{X_0}\big\}$$ and equip it with the complete metric
$$ \rr_{\eta_0}(\mu,\nu):= \sup_{t\in[0,T]} \e^{-\eta_0 t}\W_k^{\C_\infty}(\mu_t,\nu_t),\ \ \mu,\nu\in E_{0}.$$
Then \eqref{cot} means that $\Phi$ is strictly contractive in $(E_{0},\rho_{\eta_0})$. Consequently, the Banach fixed point theorem together with the definition of $\Phi$ implies that there exists a unique $\mu\in E_{0}$ such that
 $$\Phi_t(\mu)=\mu_t=\L_{X^\mu_t}, ~~~t\in[0,T].$$
Hence, the proof is completed.
\end{proof}
\subsection{Well-posedness of mean field interacting particle system}
Let $F, G$ be in Section 4.1 and $(W^i)_{i\geq 1}$ be independent $n$-dimensional Brownian motions. Consider
\beq\label{EQ31}\d X^{i,N}(t)=F\left(X_t^{i,N},\frac{1}{N}\sum_{i=1}^N \delta_{X_t^{i,N}}\right)\d t+G\left(X^{i,N}_t,\frac{1}{N}\sum_{i=1}^N \delta_{X_t^{i,N}}\right)\d W^i(t),\ \ 1\leq i\leq N.\end{equation}
\begin{lem}\label{lem}
 Assume {\bf(H)}. Then \eqref{EQ31} admits a unique strong solution and for any $T>0$, there exists a constant $C(T,N,k)>0$ such that
\begin{equation*}
\begin{split}
\sum_{i=1}^N\E\left(\sup_{t\in[-r_0,T]}|X^{i,N}(t)|^2\big|\F_0\right)\leq C(T,N,k)\sum_{i=1}^N\left(1+\|X_0^{i,N}\|_\infty^2\right).
\end{split}
\end{equation*}
\end{lem}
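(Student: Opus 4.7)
The plan is to recast the $N$-particle system \eqref{EQ31} as a classical distribution-free path dependent SDE on the product space $(\R^d)^N$ and then invoke \cite[Theorem 2.3]{28} exactly as in the proof of Theorem \ref{wep}. Set $\mathbf{X}(t)=(X^{1,N}(t),\ldots,X^{N,N}(t))$, write $L_N(\xi^1,\ldots,\xi^N):=\frac1N\sum_{j=1}^N\delta_{\xi^j}$, and define the product coefficients
\[
\mathbf{F}(\xi^1,\ldots,\xi^N):=\big(F(\xi^i,L_N(\xi^1,\ldots,\xi^N))\big)_{1\leq i\leq N},
\]
\[
\mathbf{G}(\xi^1,\ldots,\xi^N):=\mathrm{diag}\big(G(\xi^i,L_N(\xi^1,\ldots,\xi^N))\big)_{1\leq i\leq N}.
\]
Under this identification, \eqref{EQ31} is a standard path dependent SDE driven by the $nN$-dimensional Brownian motion $(W^1,\ldots,W^N)$.

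Next I would verify that $(\mathbf F,\mathbf G)$ inherit the monotonicity and growth hypotheses of the product-space analogue of {\bf(H)}. The key elementary bound is that pairing $\delta_{\xi^j}$ with $\delta_{\eta^j}$ gives
\[
\W_k^{\C_\infty}\big(L_N(\xi^1,\ldots,\xi^N),L_N(\eta^1,\ldots,\eta^N)\big)^k\leq \frac1N\sum_{j=1}^N\|\xi^j-\eta^j\|_\infty^k,
\]
and combining this with the $\ell^k$-$\ell^2$ comparison on $\R^N$ yields a constant $C(N,k)>0$ such that
\[
N\,\W_k^{\C_\infty}\big(L_N(\xi^1,\ldots,\xi^N),L_N(\eta^1,\ldots,\eta^N)\big)^2\leq C(N,k)\sum_{j=1}^N\|\xi^j-\eta^j\|_\infty^2.
\]
Summing the two inequalities in {\bf(H)} over $i=1,\ldots,N$, and using analogues of \eqref{coe}--\eqref{cof}, one obtains a one-sided Lipschitz bound and a quadratic growth bound for $(\mathbf F,\mathbf G)$ on $\C_\infty^N$ with constants depending only on $K$, $N$ and $k$. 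Continuity of $\mathbf F$ on $\C_\infty^N$ follows from the continuity of $F$ together with continuity of $L_N$ as a map from $\C_\infty^N$ into $\scr P_k(\C_\infty)$. Hence \cite[Theorem 2.3]{28} (in its time-inhomogeneous form, as noted in the proof of Theorem \ref{wep}) yields existence and pathwise uniqueness of a strong solution to \eqref{EQ31} with initial value $(X_0^{i,N})_{1\leq i\leq N}$.

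For the moment estimate, I would apply It\^o's formula to $|\mathbf X(t)|^2=\sum_{i=1}^N|X^{i,N}(t)|^2$ in direct parallel with \eqref{Itf}, use the just-established quadratic growth to obtain
\[
\d|\mathbf X(t)|^2\leq C(N,k)\big(1+\|\mathbf X_t\|_\infty^2\big)\d t+\d M(t),
\]
control $\sup_{s\in[0,t]}|M_s|$ via BDG exactly as in \eqref{TGY}, localise with $\tau_R=\inf\{t\geq0:|\mathbf X(t)|\geq R\}$, apply Gronwall's inequality to the conditionally bounded quantity $\E(\sup_{t\in[-r_0,T\wedge\tau_R]}|\mathbf X(t)|^2\mid\F_0)$, and finally let $R\to\infty$ using Fatou's lemma. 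Since $\E(\|\mathbf X_0\|_\infty^2\mid\F_0)=\sum_i\|X_0^{i,N}\|_\infty^2$, the stated bound follows. I expect no serious obstacle beyond careful bookkeeping of the $N$-dependence of the constants produced by the $\W_k^{\C_\infty}$-to-Euclidean comparison on $\C_\infty^N$; this dependence is harmless because $C(T,N,k)$ is permitted to depend on $N$.
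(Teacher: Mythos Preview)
Your proposal is correct and follows essentially the same route as the paper: rewrite \eqref{EQ31} as a single path dependent SDE on $(\R^d)^N$, use the coupling bound $\W_k^{\C_\infty}(L_N(\xi),L_N(\eta))^k\leq N^{-1}\sum_j\|\xi^j-\eta^j\|_\infty^k$ together with an $\ell^k$--$\ell^2$ comparison to transfer {\bf(H)} to the product coefficients, invoke \cite[Theorem 2.3]{28}, and then run It\^o/BDG/Gronwall exactly as in \eqref{Itf}--\eqref{MoE1}. One small correction: the claimed identity $\E(\|\mathbf X_0\|_\infty^2\mid\F_0)=\sum_i\|X_0^{i,N}\|_\infty^2$ is not an equality (in general $\sup_s\sum_i|X_0^{i,N}(s)|^2\neq\sum_i\sup_s|X_0^{i,N}(s)|^2$), but since both quantities are comparable up to a factor $N$ and $C(T,N,k)$ may depend on $N$, this is harmless.
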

\begin{proof}
For $\xi:=(\xi_1,\xi_2,\cdots,\xi_N)^*\in\C^N$, set $\tt\mu^N_{\xi}=\frac{1}{N}\sum_{i=1}^N\delta_{\xi_i}$ and
\begin{equation*}
\begin{split}
&\hat F(\xi):=\big(F(\xi_1,\tt\mu^N_{\xi}),\cdots,F(\xi_N,\tt\mu^N_{\xi})\big)^*,\\
& \hat G(\xi):=\mbox{diag}\big(G(\xi_1,\tt\mu^N_{\xi}),\cdots,G(\xi_N,\tt\mu^N_{\xi})\big),\\
&\hat W(t):=\big(W^1(t),\cdots,W^N(t)\big)^*,\ \ t\geq 0.
\end{split}
\end{equation*}
Then it is clear that $(\hat W(t))_{t\geq 0}$ is an $N\times n$-dimensional Brownian
motion and \eqref{EQ31} can be reformulated as
\begin{equation}\label{B1}
\d \hat{X}(t)=\hat{F}(\hat{X}_t)\d t+\hat G(\hat{X}_t)\d \hat W(t),\ \ \hat{X}_0=(X_0^{1,N},X_0^{2,N},\cdots,X_0^{N,N})^\ast.
\end{equation}
Note that
\begin{align}\label{W3}
\nonumber&\W_k^{\C_{\infty}}\left(\ff{1}{N}\sum_{i=1}^N\dd_{\xi_i},\ff{1}{N}\sum_{i=1}^N\dd_{\tilde{\xi}_i}\right)^2\\
&\le
\left(\ff{1}{N}\sum_{i=1}^N\|\xi_i-\tilde{\xi}_i\|_\infty^k\right)^{\frac{2}{k}}\\
\nonumber&\leq \left(N^{-1}1_{\{k\in[1,2]\}}+N^{-\frac{2}{k}}1_{\{k\in(2,\infty)\}}\right)\sum_{i=1}^N\|\xi_i-\tilde{\xi}_i\|_\infty^2,\ \ \xi_i,\tilde{\xi}_i\in\C, 1\leq i\leq N.
\end{align}
It follows from {\bf (H)} and \eqref{W3} that $\hat{F}$ is continuous in $(\C_\infty)^N$ and for any $\xi=(\xi_1,\xi_2,\cdots,\xi_N)^*, \eta=(\eta_1,\eta_2,\cdots,\eta_N)^*\in\C^N,1\leq i\leq N$,
\begin{equation}\label{ccb0}\begin{split}
&2\<F(\xi_i,\tt\mu^N_{\xi})-F(\eta_i,\tt\mu^N_{\eta}), \xi_i(0)-\eta_i(0)\>\\
&\leq K\left(\|\xi_i-\eta_i\|_\infty^2+\left(\ff{1}{N}\sum_{i=1}^N\|\xi_i-\eta_i\|_\infty^k\right)^{\frac{2}{k}}\right)\\
&\leq K\left(\|\xi_i-\eta_i\|_\infty^2+(N^{-1}1_{\{k\in[1,2]\}}+N^{-\frac{2}{k}}1_{\{k\in(2,\infty)\}})\sum_{i=1}^N\|\xi_i-\eta_i\|_\infty^2\right)\\
\end{split}\end{equation}
and similarly,
\begin{align}\label{ccB0}
\nonumber&\|G(\xi_i,\tt\mu^N_{\xi})-G(\eta_i,\tt\mu^N_{\eta})\|_{HS}^2\\
&\leq K\left(\|\xi_i-\eta_i\|_\infty^2+(N^{-1}1_{\{k\in[1,2]\}}+N^{-\frac{2}{k}}1_{\{k\in(2,\infty)\}})\sum_{i=1}^N\|\xi_i-\eta_i\|_\infty^2\right).
\end{align}
So, \eqref{ccb0} and \eqref{ccB0} imply that for any $\xi=(\xi_1,\xi_2,\cdots,\xi_N)^*, \eta=(\eta_1,\eta_2,\cdots,\eta_N)^*\in\C^N$,
\begin{equation}\label{ccb}\begin{split}
&2\<\hat{F}(\xi)-\hat{F}(\eta)), \xi(0)-\eta(0)\>\\
&=2\sum_{i=1}^N\<F(\xi_i,\tt\mu^N_{\xi})-F(\eta_i,\tt\mu^N_{\eta}), \xi_i(0)-\eta_i(0)\>\\
&\leq K\left(1+1_{\{k\in[1,2]\}}+N^{1-\frac{2}{k}}1_{\{k\in(2,\infty)\}}\right)\sum_{i=1}^N\|\xi_i-\eta_i\|_\infty^2,
\end{split}\end{equation}
and
\begin{align}\label{ccB}
\nonumber\|\hat{G}(\xi)-\hat{G}(\eta)\|_{HS}^2&\leq \sum_{i=1}^N\|G(\xi_i,\tt\mu^N_{\xi})-G(\eta_i,\tt\mu^N_{\eta})\|_{HS}^2\\
&\leq  K\left(1+1_{\{k\in[1,2]\}}+N^{1-\frac{2}{k}}1_{\{k\in(2,\infty)\}}\right)\sum_{i=1}^N\|\xi_i-\eta_i\|_\infty^2.
\end{align}
Observe
\begin{align}\label{eti}\sum_{i=1}^N\|\xi_i-\eta_i\|_\infty^2\leq N\|\xi-\eta\|_{\infty}^2,\ \ \xi=(\xi_1,\xi_2,\cdots,\xi_N)^*, \eta=(\eta_1,\eta_2,\cdots,\eta_N)^*\in\C^N.
\end{align}
Therefore, according to \cite[Theorem 2.3]{28} and \eqref{ccb}-\eqref{eti}, \eqref{B1} and consequently \eqref{EQ31} admits a unique strong solution $\{(X^{i,N}(t))_{t\geq -r_0}\}_{1\leq i\leq N}$.
Finally, by It\^{o}'s formula and \eqref{ccb0}-\eqref{ccB0}, there exists a constant $C>0$ such that
\begin{equation*}
\begin{split}
|X^{i,N}(t)|^2&\leq |X^{i,N}(0)|^2+\int_0^t 2\left\<X^{i,N}(s),G\left(X^{i,N}_s,\frac{1}{N}\sum_{i=1}^N\delta_{X^{i,N}_s}\right) \d W^{i}(s)\right\>\\
&+C\int_0^t\left[1+\|X^{i,N}_s\|^2_\infty+ (N^{-1}1_{\{k\in[1,2]\}}+N^{-\frac{2}{k}}1_{\{k\in(2,\infty)\}})\sum_{j=1}^N\|X^{j,N}_s\|_\infty^2\right]\d s.
\end{split}
\end{equation*}
It follows from BDG's inequality and \eqref{ccB0} that
\begin{align*}
&\E\left(\sup_{t\in[0,u]}\left|\int_0^t 2\left\<X^{i,N}(s),G\left(X^{i,N}_s,\frac{1}{N}\sum_{i=1}^N\delta_{X^{i,N}_s}\right) \d W^{i}(s)\right\>\right|\Big|\F_0\right)\\
&\leq \frac{1}{2}\E\left(\sup_{t\in[-r_0,u]}|X^{i,N}(t)|^2\big|\F_0\right)\\
&+C_0\int_0^u\E\left(\left[1+\|X^{i,N}_s\|^2_\infty+ (N^{-1}1_{\{k\in[1,2]\}}+N^{-\frac{2}{k}}1_{\{k\in(2,\infty)\}})\sum_{j=1}^N\|X^{j,N}_s\|_\infty^2\right]\Big|\F_0\right)\d s.
\end{align*}
Gronwall's inequality implies
\begin{equation*}
\begin{split}
\sum_{i=1}^N\E\left(\sup_{t\in[-r_0,T]}|X^{i,N}(t)|^2\big|\F_0\right)\leq C(T,N,k)\sum_{i=1}^N\left(1+\|X_0^{i,N}\|_\infty^2\right)
\end{split}
\end{equation*}
for some constant $C(T,N,k)>0$.
So, we complete the proof.
\end{proof}
\subsection{Uniform in time estimate for the second moment of McKean-Vlasov SDEs}
Finally, we give a  uniform in time estimate for the second moment of $X_t^1$ in \eqref{iip}. For simplicity, we denote $X_t=X_t^1$ and let $\mu_t$ be the distribution of $X_t$.

\begin{thm}\label{UEt} Assume {\bf(C)} with \eqref{lia} in place of \eqref{lip1a} and
\begin{align}\label{CTK0}
K_b<\frac{2(K_2-K_\sigma)\e^{-2(K_2-K_\sigma)r_0}}{6}.
\end{align}
Then there exists a constant $C>0$ such that
\begin{align}\label{use}\sup_{t\geq 0}(P_t^\ast\mu_0)(\|\cdot\|^2_{\Gamma^{r_0}})\leq C(1+\mu_0(\|\cdot\|^2_{\Gamma^{r_0}})).
\end{align}
\end{thm}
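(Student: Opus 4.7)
The plan is to apply It\^o's formula to $|X(t)|^2$, take expectations, and derive a closed differential--integral inequality for $g(t):=\E|X(t)|^2$ of the form
$$g'(t)\le -\beta g(t)+\frac{K_b}{r_0}\int_{-r_0}^0 g(t+s)\,\d s+C,$$
with $\beta>0$, under which a delay Gronwall argument gives a uniform-in-time bound on $g$. The bound \eqref{use} then follows from the elementary inequality $\|\xi\|_{\Gamma^{r_0}}^2\le\frac12|\xi(0)|^2+\frac1{2r_0}\int_{-r_0}^0|\xi(s)|^2\d s$ obtained by Jensen applied to the probability measure $\Gamma^{r_0}$.

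The key bounds come from three ingredients. First, the partial dissipativity \eqref{pdi} together with the explicit form of $\gamma$ yields $\gamma(r)r\le -K_2 r^2+C_R$ for all $r\ge 0$ (with $C_R$ depending only on $R$, $K_1$, $K_2$), because the correction is needed only on the bounded set $\{r\le 2R\}$; this bounds $2\langle x,b^{(0)}(x)\rangle$ up to a linear Young correction for $b^{(0)}(0)$. Second, \eqref{bslipsg} and Young give $\|\sigma(x)\|_{HS}^2\le 2K_\sigma(1+\varepsilon)|x|^2+C_\varepsilon$. Combining, the ``autonomous'' part of It\^o yields $-(2(K_2-K_\sigma)-\varepsilon_1)g(t)+C_1$. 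Third, for the mean-field drift, the Lipschitz bound \eqref{lia} applied to $\tilde b(\cdot,\cdot)-\tilde b(\mathbf 0,\mathbf 0)$ and the simple estimate $2ab\le a^2+b^2$ (used to keep the $K_b$-contribution \emph{linear} in $K_b$) give, after using $(\E\|X_t\|_{\Gamma^{r_0}})^2\le\E\|X_t\|_{\Gamma^{r_0}}^2$,
$$2\E\langle X(t),\int_{\scr C}\tilde b(X_t,\eta)\mu_t(\d\eta)\rangle\le (2K_b+\varepsilon_2)g(t)+2K_b\E\|X_t\|_{\Gamma^{r_0}}^2+C_2.$$
Inserting the Jensen estimate on $\E\|X_t\|_{\Gamma^{r_0}}^2$ yields the claimed differential--integral inequality with $\beta=2(K_2-K_\sigma)-3K_b-(\varepsilon_1+\varepsilon_2)$, which is strictly positive under \eqref{CTK0}.

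For the final step, set $G(T):=\sup_{s\in[-r_0,T]}g(s)$. Lemma~\ref{lem} (applied with $N=1$) ensures $g$ is locally bounded, so $G(T)<\infty$ for each $T$. Integrating the differential inequality and using $\int_{-r_0}^0 g(s+u)\d u\le r_0 G(s)$ gives
$$g(t)\le g(0)+\frac{K_b}{\beta}G(t)+\frac{C_3}{\beta},\qquad t\ge 0,$$
hence (taking sup up to $t$ and using $g(s)$ for $s\le 0$ is controlled by $\mu_0(\|\cdot\|_{\Gamma^{r_0}}^2)$)
$$G(t)\Bigl(1-\frac{K_b}{\beta}\Bigr)\le C\bigl(1+\mu_0(\|\cdot\|_{\Gamma^{r_0}}^2)\bigr).$$
Under \eqref{CTK0} we have $K_b/\beta<1$ (with room), giving the uniform estimate on $G(t)$ and hence \eqref{use}.

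The main obstacle is controlling the mean-field cross-term in a way that keeps the perturbation \emph{linear} rather than quadratic in $K_b$: using Cauchy--Schwarz in the form $2|x|\cdot K_b\|\xi\|_{\Gamma^{r_0}}\le K_b|x|^2+K_b\|\xi\|_{\Gamma^{r_0}}^2$ (as opposed to $\eta|x|^2+\eta^{-1}K_b^2\|\xi\|_{\Gamma^{r_0}}^2$) is the crucial choice; only then does the threshold for $K_b$ in \eqref{CTK0} suffice. A second delicate point is that $g(s)$ for $s\in[-r_0,0]$ must be absorbed into $\mu_0(\|\cdot\|_{\Gamma^{r_0}}^2)$ rather than $\mu_0(\|\cdot\|_\infty^2)$, which is why we phrase the final estimate through the sup of $g$ over $[-r_0,t]$ and use $\mu_0(|\cdot(s)|^2)\le 2\mu_0(\|\cdot\|_{\Gamma^{r_0}}^2)\cdot(\text{const})$ via $\Gamma^{r_0}$.
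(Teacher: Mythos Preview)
Your overall strategy coincides with the paper's: apply It\^o's formula to $|X(t)|^2$, extract $-2(K_2-K_\sigma)|X(t)|^2$ from the partial dissipativity \eqref{pdi} and the bound \eqref{bslipsg}, keep the $K_b$-contribution linear via $2ab\le a^2+b^2$ (exactly the paper's \eqref{mon1}), and close by a delay Gronwall argument. The paper organises this through the exponential weight $\e^{\lambda_1 t}$ with $\lambda_1=2(K_2-K_\sigma)$ and applies an integral Gronwall to $\e^{\lambda_1 t}\|X_t\|_{\Gamma^{r_0}}^2$, whereas you use the sup-functional $G(T)=\sup_{s\in[-r_0,T]}g(s)$; both are standard, and your route even avoids the extra $\e^{\lambda_1 r_0}$ factor that the paper picks up.

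There is, however, one genuine gap in your final step. Your claim ``$\mu_0(|\cdot(s)|^2)\le \text{const}\cdot\mu_0(\|\cdot\|_{\Gamma^{r_0}}^2)$ via $\Gamma^{r_0}$'' is false: there is no constant $C$ with $|\xi(s)|^2\le C\,\|\xi\|_{\Gamma^{r_0}}^2$ for all $\xi\in\C$ and all $s\in[-r_0,0]$. Take $\xi$ to be a tall narrow continuous spike centred at some $s_0\in(-r_0,0)$ with $\xi(0)=0$; then $\|\xi\|_{\Gamma^{r_0}}$ can be made arbitrarily small while $|\xi(s_0)|$ stays fixed. Hence neither $G(0^-)=\sup_{s\in[-r_0,0]}g(s)$ nor the averaged quantity $\frac{1}{r_0}\int_{-r_0}^0 g(u)\,\d u$ that your delay term actually produces is controlled by $\mu_0(\|\cdot\|_{\Gamma^{r_0}}^2)$. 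What your argument (and, in fact, the paper's) really delivers is \eqref{use} with the right-hand side replaced by
\[
C\Bigl(1+\mu_0(|\cdot(0)|^2)+\tfrac{1}{r_0}\int_{-r_0}^0\mu_0(|\cdot(u)|^2)\,\d u\Bigr)\le C\bigl(1+\mu_0(\|\cdot\|_\infty^2)\bigr),
\]
which suffices for the downstream application since Theorem~\ref{POC} assumes $\mu_0\in\scr P_2(\C_\infty)$. The paper's own passage from $\frac{1}{2}\e^{\lambda_1 r_0}\frac{1}{r_0}\int_{-r_0}^0|X(u)|^2\,\d u+\cdots$ to $2\e^{\lambda_1 r_0}\|X_0\|_{\Gamma^{r_0}}^2$ has the same slip, so you are not missing an idea here; rather, the stated form of \eqref{use} is slightly too strong for either argument as written.
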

\begin{proof}
Recall $\mathbf{0}(s)=(0,0,\cdots,0)\in\R^d, s\in[-r_0,0]$.
By {\bf(C)} with \eqref{lia} in place of \eqref{lip1a}, we can find a constant $C_0>0$ such that
\begin{align}\label{mon0}
\nonumber&2\<x,b^{(0)}(x)\>+d\beta^2 +\|\sigma(x)\|_{HS}^2+2|x||\tilde{b}(\mathbf{0},\mathbf{0})|\\
\nonumber&\leq (2K_1+2K_2)|x|^21_{\{|x|\leq 2R\}}-(2K_2-2K_\sigma)|x|^2+2\<x,b^{(0)}(0)\>+d\beta^2\\
\nonumber&+2\sqrt{2K_\sigma}\|\sigma(0)\|_{HS}|x|+\|\sigma(0)\|_{HS}^2+2|x||\tilde{b}(\mathbf{0},\mathbf{0})|\\
&\leq (2K_1+2K_2)4R^2+d\beta^2+\|\sigma(0)\|_{HS}^2-(2K_2-2K_\sigma)|x|^2\\
\nonumber&+2|x|(|b^{(0)}(0)|+\sqrt{2K_\sigma}\|\sigma(0)\|_{HS}+|\tilde{b}(\mathbf{0},\mathbf{0})|)\\
\nonumber&\leq C_0-(2K_2-2K_\sigma)|x|^2+\frac{2(K_2-K_\sigma)\e^{-2(K_2-K_\sigma)r_0}-6K_b}{4}|x|^2,\ \ x\in\R^d,
\end{align}
and
\begin{align}\label{mon1}
\nonumber&2\left\<\xi(0),\int_{\C}\tilde{b}(\xi,\eta)\mu_t(\d \eta)-\tilde{b}(\mathbf{0},\mathbf{0})\right\>\\
\nonumber&\leq 2|\xi(0)|K_b(\|\xi\|_{\Gamma^{r_0}}+\mu_t(\|\cdot\|_{\Gamma^{r_0}}))\\
&\leq K_b\|\xi\|_{\Gamma^{r_0}}^2+K_b|\xi(0)|^2+K_b\mu_t(\|\cdot\|_{\Gamma^{r_0}})^2+K_b|\xi(0)|^2\\
\nonumber&\leq 2K_b|\xi(0)|^2+K_b\|\xi\|_{\Gamma^{r_0}}^2+K_b\mu_t(\|\cdot\|_{\Gamma^{r_0}})^2.
\end{align}
Let \begin{align}\label{la1}\lambda_1=2(K_2-K_\sigma), \ \ \varepsilon_0=\frac{\lambda_1\e^{-\lambda_1r_0}-6K_b}{4}.
\end{align}
Combining \eqref{mon0}-\eqref{la1} with It\^{o}'s formula, we derive
\begin{align}\label{fzn00}
\nonumber&\d [\e^{\lambda_1 t}|X(t)|^2]\\
&\leq \e^{\lambda_1 t}C_0\d t+\e^{\lambda_1 t}(\varepsilon_0+2K_b)|X(t)|^2\d t+\e^{\lambda_1 t}K_b(\|X_t\|_{\Gamma^{r_0}}^2+\mu_t(\|\cdot\|_{\Gamma^{r_0}}^2))\d t+\e^{\lambda_1 t}\d \bar{M}_t
\end{align}
for some martingale $\bar{M}_t$  under $\P(\cdot|\F_0)$. \eqref{fzn00} gives
\begin{align}\label{fzn01}
\nonumber \e^{\lambda_1 v}|X(v)|^2&\leq |X(0)|^2+\int_0^{v}\e^{\lambda_1 t}C_0\d t+\int_0^v\e^{\lambda_1 t}(\varepsilon_0+2K_b)|X(t)|^2\d t\\
&+\int_0^{v}\e^{\lambda_1 t}K_b(\|X_t\|_{\Gamma^{r_0}}^2+\mu_t(\|\cdot\|_{\Gamma^{r_0}}^2))\d t+\int_0^v\e^{\lambda_1 t}\d \bar{M}_t,\ \ v\geq 0.
\end{align}
So, we get
\begin{align}\label{fzn05}
\nonumber&\frac{1}{r_0}\int_{(-r_0+t)\vee0}^t\e^{\lambda_1 v}|X(v)|^2\d v\\
\nonumber&\leq \frac{1}{r_0}\int_{(-r_0+t)\vee0}^t|X(0)|^2\d v+\frac{1}{r_0}\int_{(-r_0+t)\vee0}^t\int_0^{v}\e^{\lambda_1 s}C_0\d s\d v\\
\nonumber&+(\varepsilon_0+2K_b)\frac{1}{r_0}\int_{(-r_0+t)\vee0}^t\int_0^{v}\e^{\lambda_1 s}|X(s)|^2\d s\d v\\
&+\frac{1}{r_0}\int_{(-r_0+t)\vee0}^t\int_0^{v}\e^{\lambda_1 s}K_b(\|X_s\|_{\Gamma^{r_0}}^2+\mu_s(\|\cdot\|_{\Gamma^{r_0}}^2))\d s\d v\\
&\nonumber+\frac{1}{r_0}\int_{(-r_0+t)\vee0}^t\int_0^v\e^{\lambda_1 s}\d \bar{M}_s\d v\\
\nonumber&\leq |X(0)|^2+\int_0^{t}\e^{\lambda_1 s}C_0\d s+(\varepsilon_0+2K_b)\int_0^{t}\e^{\lambda_1 s}|X(s)|^2\d s\\
\nonumber&+\int_0^{t}\e^{\lambda_1 s}K_b(\|X_s\|_{\Gamma^{r_0}}^2+\mu_s(\|\cdot\|_{\Gamma^{r_0}}^2))\d s+\frac{1}{r_0}\int_{(-r_0+t)\vee0}^t\int_0^v\e^{\lambda_1 s}\d \bar{M}_s\d v.
\end{align}
By the definition of $\|\cdot\|_{\Gamma^{r_0}}$, we arrive at
\begin{align*}
\e^{\lambda_1 t}\|X_t\|_{\Gamma^{r_0}}^2&\leq\frac{1}{2}\e^{\lambda_1 t}\frac{1}{r_0}\int_{-r_0}^0|X(t+u)|^2\d u+\frac{1}{2}\e^{\lambda_1 t}|X(t)|^2\\
&=\frac{1}{2}\e^{\lambda_1 r_0}\frac{1}{r_0}\int_{-r_0}^0\e^{\lambda_1 (t-r_0)}|X(t+u)|^2\d u+\frac{1}{2}\e^{\lambda_1 t}|X(t)|^2\\
&\leq\frac{1}{2}\e^{\lambda_1 r_0}\frac{1}{r_0}\int_{-r_0}^0\e^{\lambda_1 (t+u)^+}|X(t+u)|^2\d u+\frac{1}{2}\e^{\lambda_1 t}|X(t)|^2\\
&\leq  \frac{1}{2}\e^{\lambda_1r_0}\frac{1}{r_0}\int_{-r_0}^0|X(u)|^2\d u+\frac{1}{2}\e^{\lambda_1r_0}\frac{1}{r_0}\int_{(-r_0+t)\vee0}^t\e^{\lambda_1 v}|X(v)|^2\d v+\frac{1}{2}\e^{\lambda_1 t}|X(t)|^2.
\end{align*}
Combining this with \eqref{fzn01}-\eqref{fzn05}, we derive
\begin{align*}
\e^{\lambda_1 t}\|X_t\|_{\Gamma^{r_0}}^2&\leq \frac{1}{2}\e^{\lambda_1r_0}\frac{1}{r_0}\int_{-r_0}^0|X(u)|^2\d u\\
&+\frac{1}{2}\e^{\lambda_1r_0}|X(0)|^2+\frac{1}{2}\e^{\lambda_1r_0}\int_0^{t}\e^{\lambda_1 s}C_0\d s+\frac{1}{2}(\varepsilon_0+2K_b)\e^{\lambda_1r_0}\int_0^{t}\e^{\lambda_1 s}|X(s)|^2\d s\\
\nonumber&+\frac{1}{2}\e^{\lambda_1r_0}\int_0^{t}\e^{\lambda_1 s}K_b(\|X_s\|_{\Gamma^{r_0}}^2+\mu_s(\|\cdot\|_{\Gamma^{r_0}}^2))\d s+\frac{1}{2}\e^{\lambda_1r_0}\frac{1}{r_0}\int_{(-r_0+t)\vee0}^t\int_0^v\e^{\lambda_1 s}\d \bar{M}_s\d v\\
&+\frac{1}{2} |X(0)|^2+\frac{1}{2}\int_0^{t}\e^{\lambda_1 s}C_0\d s+\frac{1}{2}\int_0^t\e^{\lambda_1 s}(\varepsilon_0+2K_b)|X(s)|^2\d s\\
&+\frac{1}{2}\int_0^{t}\e^{\lambda_1 s}K_b(\|X_s\|_{\Gamma^{r_0}}^2+\mu_s(\|\cdot\|_{\Gamma^{r_0}}^2))\d s+\frac{1}{2}\int_0^t\e^{\lambda_1 s}\d \bar{M}_s.
\end{align*}
This together with the fact $|X(s)|^2\leq 2\|X_s\|_{\Gamma^{r_0}}^2$ implies that
\begin{align*}
\e^{\lambda_1 t}\|X_t\|_{\Gamma^{r_0}}^2
&\leq 2\e^{\lambda_1r_0}\|X_0\|_{\Gamma^{r_0}}^2+\e^{\lambda_1r_0}\int_0^{t}\e^{\lambda_1 s}C_0\d s+(\varepsilon_0+2K_b)\e^{\lambda_1r_0}\int_0^{t}\e^{\lambda_1 s}|X(s)|^2\d s\\
\nonumber&+\e^{\lambda_1r_0}\int_0^{t}\e^{\lambda_1 s}K_b(\|X_s\|_{\Gamma^{r_0}}^2+\mu_s(\|\cdot\|_{\Gamma^{r_0}}^2))\d s\\
&+\frac{1}{2}\e^{\lambda_1r_0}\frac{1}{r_0}\int_{(-r_0+t)\vee0}^t\int_0^v\e^{\lambda_1 s}\d \bar{M}_s\d v+\frac{1}{2}\int_0^t\e^{\lambda_1 s}\d \bar{M}_s\\
&\leq 2\e^{\lambda_1r_0}\|X_0\|_{\Gamma^{r_0}}^2+\e^{\lambda_1r_0}\int_0^{t}\e^{\lambda_1 s}C_0\d s\\
\nonumber&+(2\varepsilon_0+5K_b)\e^{\lambda_1r_0}\int_0^{t}\e^{\lambda_1 s}\|X_s\|_{\Gamma^{r_0}}^2\d s
+\e^{\lambda_1r_0}\int_0^{t}\e^{\lambda_1 s}K_b\mu_s(\|\cdot\|_{\Gamma^{r_0}}^2)\d s\\
&+\frac{1}{2}\e^{\lambda_1r_0}\frac{1}{r_0}\int_{(-r_0+t)\vee0}^t\int_0^v\e^{\lambda_1 s}\d \bar{M}_s\d v+\frac{1}{2}\int_0^t\e^{\lambda_1 s}\d \bar{M}_s.
\end{align*}
Applying Gronwall's inequality, we conclude
\begin{align*}
\E\e^{\lambda_1 t}\|X_t\|_{\Gamma^{r_0}}^2&\leq 2\e^{\lambda_1r_0} \e^{\e^{\lambda_1r_0}(6K_b +2\varepsilon_0) t}\E\|X_0\|_{\Gamma^{r_0}}^2+\e^{\lambda_1r_0}\int_0^{t}\e^{\lambda_1 s}C_0\d s\\
&+\int_0^t\e^{\lambda_1r_0}(6K_b+2\varepsilon_0)\e^{\e^{\lambda_1r_0}(6K_b+2\varepsilon_0)(t-s)}\int_0^s\e^{\lambda_1 r_0}\e^{\lambda_1 v}C_0\d v\d s.
\end{align*}
This combined with \eqref{la1} yields
\begin{align*}
\E\|X_t\|_{\Gamma^{r_0}}^2&\leq 2\e^{\lambda_1r_0} \e^{-\frac{1}{2}(\lambda _1-\e^{\lambda_1r_0}6K_b ) t}\E\|X_0\|_{\Gamma^{r_0}}^2+\e^{\lambda_1r_0}\int_0^{t}\e^{-\lambda_1 (t-s)}C_0\d s\\
&+\int_0^t\e^{\lambda_1r_0}(6K_b+2\varepsilon_0)\e^{-\frac{1}{2}(\lambda _1-\e^{\lambda_1r_0}6K_b ) (t-s)}\int_0^s\e^{\lambda_1 r_0}\e^{-\lambda_1(s- v)}C_0\d v\d s,
\end{align*}
which together with \eqref{la1} and $\lambda_1>\e^{\lambda_1r_0}6K_b$ due to \eqref{CTK0} completes the proof.
\end{proof}

\beg{thebibliography}{99}

\bibitem{MAW} M. Arnaudon, A. Thalmaier, F.-Y. Wang, Gradient estimates and Harnack Inequalities on non-compact Riemannian manifolds,  \emph{Stochastic Process. Appl.}  119(2009), 3653-360.

\bibitem{BPR} R. Baldasso, A. Pereira, G. Reis, Large deviations for interacting diffusions with path-dependent McKean-Vlasov limit, \emph{Ann. Appl. Probab.}  32(2022), 665-695.

\bibitem{BRW20} J. Bao, P. Ren, F.-Y. Wang, Bismut formulas for Lions derivative of McKean-Vlasov SDEs with memory, \emph{J. Differential Equations}  282(2021), 285-329.

\bibitem{DEGZ} A. Durmus,  A. Eberle, A. Guillin, R. Zimmer, An elementary approach to uniform in time propagation of chaos, \emph{Proc. Amer. Math. Soc.} 148(2020), 5387-5398.

\bibitem{EGZ}  A. Eberle, A. Guillin, R. Zimmer, Quantitative Harris-type theorems for diffusions and McKean-
Vlasov processes, \emph{Trans. Amer. Math. Soc.} 371(2019), 7135-7173.

\bibitem{GS} X. Gu, Y. Song, Large and moderate deviation principles for path-distribution-dependent stochastic differential equations, \emph{Discrete Contin. Dyn. Syst. Ser. S}  16(2023), 901-923.

\bibitem{GBM} A. Guillin, P. Le Bris, P. Monmarch\'{e}, Uniform in time propagation of chaos for the 2D vortex model and other singular stochastic systems, \emph{J. Eur. Math. Soc.} (2024), DOI 10.4171/JEMS/1413.

\bibitem{Han} Y. Han, Solving McKean-Vlasov SDEs via relative entropy, \emph{arXiv:2204.05709v3}.

\bibitem{HX} X. Huang,  Path-distribution dependent SDEs with singular coefficients,  \emph{Electron. J. Probab.} 26(2021), 1-21.

\bibitem{HL} X. Huang, W. Lv,  Exponential Ergodicity and propagation of chaos for path-distribution dependent stochastic Hamiltonian system, \emph{Electron. J. Probab.} 28(2023), Paper No. 134, 20 pp.
\bibitem{H2023+} X. Huang,    Propagation of Chaos for Mean Field Interacting Particle System with Multiplicative Noise, \emph{arXiv:2308.15181v6}.

\bibitem{HRW} X. Huang, M. R\"{o}ckner, F.-Y. Wang, Non-linear Fokker--Planck equations for probability measures on path space and path-distribution dependent SDEs, \emph{Discrete Contin. Dyn. Syst.} 39(2019), 3017-3035.

\bibitem{HW2209} X. Huang, F.-Y. Wang, Regularities and Exponential Ergodicity in Entropy for SDEs Driven by Distribution Dependent Noise, \emph{Bernoulli} 30(2024), 3303-3323.

\bibitem{53} K. It\^o, M. Nisio, On stationary solutions of a stochastic differential equation, \emph{J. Math. Kyoto Univ.} 4(1964), 41-75.

\bibitem{JW} P.-E. Jabin,  Z. Wang, Quantitative estimates of propagation of chaos for stochastic systems with $W^{-1,\infty}$ kernels, \emph{Invent. Math.} 214(2018), 523-591.

\bibitem{L18} D. Lacker, On a strong form of propagation of chaos for McKean-Vlasov equations,  \emph{Electron. Commun. Probab.} 23(2018), Paper No. 45, 11 pp.

\bibitem{L21} D. Lacker, Hierarchies, entropy, and quantitative propagation of chaos for mean field diffusions, \emph{Probab. Math. Phys.} 4(2023), 377-432.

\bibitem{LL} D. Lacker, L. Le Flem, Sharp uniform-in-time propagation of chaos, \emph{Probab. Theory Related Fields} 187(2023), 443-480.

\bibitem{LMW} M. Liang, M. B. Majka, J. Wang, Exponential ergodicity for SDEs and McKean-Vlasov processes with L\'{e}vy noise, \emph{Ann. Inst. Henri Poincar\'e Probab. Stat.} 57(2021), 1665-1701.

\bibitem{MRW} P. Monmarch\'{e}, Z. Ren, S. Wang, Time-uniform log-Sobolev inequalities and applications to propagation of chaos, \emph{arXiv:2401.07966}.

\bibitem{LWZ} W. Liu, L. Wu, C. Zhang, Long-time behaviors of mean-field interacting particle systems related to McKean-Vlasov equations, \emph{Comm. Math. Phys.} 387(2021), 179-214.

\bibitem{M03} F. Malrieu,  Convergence to equilibrium for granular media equations and their Euler schemes, \emph{Ann. Appl. Probab.} 13(2003), 540-560.

\bibitem{M} H. P. McKean, A class of Markov processes associated with nonlinear parabolic equations, \emph{Proc Natl Acad Sci U S A} 56(1966), 1907-1911.

\bibitem{29} S.-E. A. Mohammed, \emph{Stochastic functional differential equations,} Pitman Advanced Publishing Program, Boston, 1984.

\bibitem{Pin} M. S. Pinsker, \emph{ Information and Information Stability of Random Variables and Processes,} Holden-Day, San Francisco, 1964.

\bibitem{RW}  P. Ren, F.-Y. Wang, Exponential convergence in entropy and Wasserstein for McKean-Vlasov SDEs, \emph{Nonlinear Anal.} 206(2021), 112259.

 \bibitem{23RW} P. Ren, F.-Y. Wang, Entropy estimate between diffusion processes and application to McKean-Vlasov SDEs,  \emph{arXiv:2302.13500}.

\bibitem{RWu19} P. Ren, J.-L. Wu, Least squares estimator for path-dependent McKean-Vlasov SDEs via discrete-time observations, \emph{Acta Math. Sci. Ser. B} 39(2019), 691-716.

\bibitem{28} Max-K. von Renesse, M. Scheutzow, Existence and uniqueness of solutions of stochastic functional differential equations, \emph{Random Oper. Stoch. Equ.} 18(2010), 267-284.

\bibitem{Schuh} K. Schuh, Global contractivity for Langevin dynamics with distribution-dependent forces and uniform in time propagation of chaos, \emph{Ann. Inst. Henri Poincar\'{e} Probab. Stat.} 60(2024), 753-789.

\bibitem{SWY} J. Shao, F.-Y. Wang, C. Yuan, Harnack inequalities for stochastic (functional) differential equations with non-Lipschitzian coefficients,  \emph{Elect. J. Probab.} 17(2012), 1-18.

\bibitem{Song} Y. Song,  Gradient estimates and exponential ergodicity for Mean-Field SDEs with jumps, \emph{J. Theoret. Probab.} 33(2020), 201-238.

\bibitem{W23} F.-Y. Wang, Exponential ergodicity for non-dissipative McKean-Vlasov SDEs, \emph{Bernoulli} 29(2023), 1035-1062.
\bibitem{FYW1} F.-Y. Wang, Distribution-dependent SDEs for Landau type equations, \emph{Stochatic Process. Appl.} 128(2018), 595-621.

\bibitem{HARNACK} F.-Y. Wang, \emph{Harnack Inequality for Stochastic Partial Differential Equations,} Springer, New York, 2013.

\bibitem{W15} F.-Y. Wang, Asymptotic couplings by reflection and applications for nonlinear monotone SPDEs, \emph{Nonlinear Anal.} 117(2015), 169-188.

\bibitem{WRbook} F.-Y. Wang, P. Ren, Distribution dependent stochastic differential equations, World Scientific(2024).

\end{thebibliography}

\end{document}